\documentclass[11pt,a4paper]{article}
\usepackage[margin=1.1in]{geometry} %
\usepackage[utf8]{inputenc}
\usepackage{mathtools} %
\usepackage{amssymb}

\usepackage{microtype}
\usepackage{enumerate}
\usepackage{accents} %
\usepackage{graphicx}
\usepackage[table,dvipsnames]{xcolor}

\usepackage[colorlinks, pagebackref=true]{hyperref}
\hypersetup{
  linkcolor=[rgb]{0.3,0.3,0.6},
  citecolor=[rgb]{0.2, 0.6, 0.2},
  urlcolor=[rgb]{0.6, 0.2, 0.2}
}
\hypersetup{breaklinks=true}

\allowdisplaybreaks[1]

\usepackage{amsthm}
\usepackage{thmtools, thm-restate}
\usepackage[capitalize, nameinlink]{cleveref}

\declaretheorem[name=Theorem, parent=section]{theorem}
\declaretheorem[name=Corollary, sibling=theorem]{corollary}

\declaretheorem[name=Lemma, sibling=theorem]{lemma}

\declaretheorem[name=Definition, sibling=theorem, style=definition]{definition}
\declaretheorem[name=Remark, sibling=theorem, style=definition]{remark}

\declaretheorem[name=Example, sibling=theorem, style=definition]{example}

\declaretheorem[name=Claim, sibling=theorem, style=remark]{claim}

\crefname{theorem}{Theorem}{Theorems}
\crefname{lemma}{Lemma}{Lemmas}
\crefname{proposition}{Proposition}{Propositions}
\crefname{corollary}{Corollary}{Corollaries}
\crefname{conjecture}{Conjecture}{Conjectures}
\crefname{observation}{Observation}{Observations}
\crefname{fact}{Fact}{Facts}
\crefname{definition}{Definition}{Definitions}
\crefname{remark}{Remark}{Remarks}
\crefname{problem}{Problem}{Problems}
\crefname{example}{Example}{Examples}
\crefname{question}{Question}{Questions}
\crefname{claim}{Claim}{Claims}
\crefname{section}{Section}{Sections}
\crefname{subsection}{Section}{Sections}
\crefname{subsubsection}{Section}{Sections}

\numberwithin{equation}{section}

\newcommand{\degenleq}{\unlhd}

\DeclareMathOperator{\subrank}{Q}

\DeclareMathAccent{\wtilde}{\mathord}{largesymbols}{"65}

\newcommand{\CC}{\mathbb{C}}
\newcommand{\NN}{\mathbb{N}}

\DeclareMathOperator{\Q}{Q}

\def\<#1>{\left\langle\ignorespaces#1\unskip\right\rangle}
\DeclareMathOperator{\codim}{codim}

\usepackage{scalerel}

\DeclareMathOperator{\GL}{GL}
\DeclareMathOperator{\SL}{SL}

\DeclareMathOperator{\linspan}{span}
\DeclareMathOperator{\rank}{rank} 
\DeclareMathOperator{\col}{col} 
\DeclareMathOperator{\rk}{rk}
\DeclareMathOperator{\row}{row}

\DeclareMathOperator{\im}{im}
\DeclareMathOperator{\diag}{diag}
\DeclareMathOperator{\tr}{tr}

\newcommand{\Comp}{\mathsf{Comp}}
\newcommand{\Alt}{\mathsf{Skew}}

\DeclareMathOperator{\sign}{sign}

\DeclarePairedDelimiterX\braket[2]{\langle}{\rangle}{#1\,\delimsize\vert\,\mathopen{}#2}

\DeclareMathOperator{\id}{Id}

\usepackage{authblk}

\title{Concise tensors with maximal symmetries}

\author{Annika Holtrup}
\author{Jeroen Zuiddam}
\affil{University of Amsterdam}
\date{}

\begin{document}

\maketitle

\vspace{-2em}
\begin{abstract}
Conner, Gesmundo, Landsberg and Ventura (2019) determined the largest stabilizer  dimension of concise $n\times n \times n$ tensors that are binding, and they determined the corresponding maximizing tensors to be the null algebra tensors. They left as an open problem to extend this to all concise $n \times n \times n$ tensors (i.e.~dropping binding). We solve this problem: We prove that the largest stabilizer dimension of concise $n\times n\times n$ tensors is $n^2 + 1$ and the maximizers are the null algebra tensors (as in the binding case) and the skew symmetric tensor $e_1 \wedge e_2 \wedge e_3$. As part of our approach we obtain upper bounds on the stabilizer dimension of matrix tuples under left-right action (generalized Kronecker quiver representations), which we think are of independent interest.
\end{abstract}

 \tableofcontents

\section{Introduction}

One of the most basic parameters of a tensor in $T \in \CC^n \otimes \CC^n \otimes \CC^n$ is its \emph{stabilizer dimension}: the dimension of the subgroup of elements $g \in \GL_n \times \GL_n \times \GL_n$ for which $g\cdot T = T$.\footnote{We discuss other variations from the literature in \cref{rem:conventions}.} Despite its importance and large interest as a tool in understanding and classifying tensors \cite{MR506377, Burgisser1997Degen-6409, MR1781073, MR1773770, MR2932001, conner2021tensorsmaximalsymmetries, MR4204580, MR4644081, MR2865915}, much is still unknown about it, in particular regarding what values it can take and for which tensors. Results on this were first obtained by Conner,  Gesmundo, Landsberg and Ventura~\cite{conner2021tensorsmaximalsymmetries}, motivated by questions in algebraic complexity theory. They proved among other things a tight upper bound on the stabilizer dimension of so-called balanced tensors and classified the maximizing tensors, and left as an open problem to do the same for all concise tensors. We solve this problem (\cref{th:main}), as we will discuss in a moment. In the process we also determine a new upper bound on the stabilizer dimension of matrix tuples under left-right action (generalised Kronecker quiver representations).

Previous work has mostly focused on directions orthogonal to ours, namely the stabilizer dimension of generic tensors (or generic objects in various settings) and studying specific tensors of interest; we give a brief overview:

\emph{Generic tensors.}
For $n\geq4$, a generic $T \in \CC^n \otimes \CC^n \otimes \CC^n$ has stabilizer dimension $2$ which is the smallest value it can possibly have \cite{MR912058, burgisser2015fundamentalinvariantsorbitclosures, bryan2018locallymaximallyentangledstates}. 
The study of stabilizers of points in general positions is a classical subject \cite{MR267040, MR304554} and generic stabilizers are studied more generally in \cite{garibaldi2019genericallyfreerepresentationsi, derksen2020maximumlikelihoodestimationtensor}. 

\emph{Stabilizers of explicit tensors and their dimension.}
The stabilizer of the matrix multiplication tensor is for example studied by de Groote and B\"{u}rgisser--Ikenmeyer \cite{MR506377, burgisser2015fundamentalinvariantsorbitclosures}.
Bernardi, De Lazzari and Gesmundo obtained bounds on the dimension of stabilizer groups for certain graph tensors in \cite{MR4645236} while computing the dimension of the corresponding tensor network varieties.

\emph{Classifying tensor orbits and degenerations.} Stabilizer dimension (because of its invariance and semi-continuity properties) is a natural tool for separating isomorphism classes of tensors (orbits) and determining possible degenerations between tensors \cite{Burgisser1997Degen-6409, MR1781073, MR1773770, MR2865915,  MR4644081}. 

\emph{Symmetric tensors and algebra tensors.}
As a variation, symmetric tensors (polynomials) and their stabilizers under the symmetric action are a subject of interest, for instance for determinant and permanent polynomials (e.g., in the context of VP vs.~VNP \cite{MR2861717}) and immanants \cite{ye2011stabilizerimmanants}
and iterated matrix multiplication polynomials \cite{MR3513064}. 
They similarly appear in classification of algebras
\cite{MR524979}.

\subsection{Basics, examples, previous results}

Let $n \in \NN$ and 
let $T \in V = \CC^n \otimes \CC^n \otimes \CC^n$ be a tensor. Let the group $G = \GL_n \times \GL_n \times \GL_n$ act on $V$ in the natural way. 
We call
\[
G_T \coloneqq \{g \in G : g\cdot T = T\}
\]
the \emph{stabilizer subgroup} of $T$, and $\dim G_T$ the \emph{stabilizer dimension}.  Given an explicit tensor~$T$, computing the stabilizer dimension can be done efficiently via Lie algebras, as follows.
The Lie algebra of $G_T$ \cite[II.2.5, Folgerung~3]{MR768181} is the linear subspace
\[
\mathfrak{g}_T \coloneqq \{ (X,Y,Z) \in (\CC^{n \times n})^{\times 3} : (X \otimes I \otimes I + I \otimes Y \otimes I + I \otimes I \otimes Z)\cdot T = 0 \}.
\]
We write $S \cong T$ if $S \in G\cdot T$ and $S \degenleq T$ if $S \in \overline{G \cdot T}$.
Some basic properties of the stabilizer dimension are:
\begin{lemma}[{\cite[Abschnitt~4]{Burgisser1997Degen-6409}}] Let $S,T \in \CC^n \otimes \CC^n \otimes \CC^n$.
\begin{enumerate}[\upshape (1)]
\item $\dim \mathfrak{g}_T = \dim G_T$.
\item $\dim G_T = \dim G - \dim G\cdot T$. 
\item If $S \cong T$, then $\dim G_S = \dim G_T$.
\item If $S \degenleq T$, then $\dim G_S \geq \dim G_T$. 
\end{enumerate}
\end{lemma}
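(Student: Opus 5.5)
Items (1) and (2) are standard facts about algebraic group actions, and (3) and (4) follow from them. I would prove them in the order (1), (2), (3), (4).

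For (1), $G_T$ is a closed algebraic subgroup of $G$, being the preimage of the point $T$ under the orbit map $g\mapsto g\cdot T$. Over $\CC$ (characteristic zero) every algebraic group is smooth, so $\dim G_T = \dim \operatorname{Lie}(G_T)$. The cited result \cite[II.2.5, Folgerung~3]{MR768181} identifies $\operatorname{Lie}(G_T)$ with the kernel of the differential of the orbit map at the identity. This differential sends $(X,Y,Z)\in\mathfrak{gl}_n^3$ to $(X\otimes I\otimes I + I\otimes Y\otimes I + I\otimes I\otimes Z)\cdot T$, so its kernel is exactly $\mathfrak{g}_T$.

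For (2), the orbit $G\cdot T$ is locally closed, and its closure is irreducible since $G$ is irreducible. The orbit map $G\to \overline{G\cdot T}$ is dominant and all its fibers are cosets $gG_T$, each of dimension $\dim G_T$. The fiber dimension theorem then gives $\dim G = \dim \overline{G\cdot T} + \dim G_T$. Combined with $\dim G\cdot T=\dim\overline{G\cdot T}$, this is (2). An alternative route is homogeneity: $G\cdot T$ is smooth and the orbit map has constant rank, so $\dim G\cdot T = \dim G - \dim\mathfrak{g}_T$, and then (1) gives (2).

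For (3), if $S=h\cdot T$ with $h\in G$, then $G_S = hG_Th^{-1}$. Conjugation is an isomorphism of varieties, so $\dim G_S=\dim G_T$. Alternatively, $G\cdot S = G\cdot T$, and (2) gives the equality directly.

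For (4), suppose $S\in\overline{G\cdot T}$. The closure $\overline{G\cdot T}$ is $G$-stable, so it contains $G\cdot S$, and hence $\dim G\cdot S\le \dim\overline{G\cdot T} = \dim G\cdot T$. Applying (2) to both tensors gives $\dim G_S\ge \dim G_T$.

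A more hands-on argument for (4) uses (1): $\dim\mathfrak{g}_S$ is the corank of the linear map $(X,Y,Z)\mapsto (X\otimes I\otimes I+I\otimes Y\otimes I+I\otimes I\otimes Z)\cdot S$. This map depends linearly on $S$, so its rank is lower semicontinuous in $S$. The corank is constant on $G\cdot T$ by (3), so it can only increase on the closure.

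The main obstacle is not computational. It is justifying the algebraic-geometry inputs: smoothness of $G_T$ in characteristic zero, which is needed for (1), and the orbit dimension formula in (2). I would cite these as standard (Kraft's book, \cite{MR768181}) rather than reprove them.
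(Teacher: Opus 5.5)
Your proposal is correct and follows the route the paper intends. The paper gives no argument of its own: it defers to exactly the standard facts you invoke (the Lie algebra of the stabilizer from Kraft, II.2.5, the fibre-dimension theorem from Humphreys, \S4.3, and local closedness of orbits, Humphreys \S8.3), and your sketch unpacks these item by item, with the rank-semicontinuity argument for (4) being a valid, slightly more elementary alternative once (1) is available.
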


These are standard facts about algebraic groups and their orbits, and can also be found in \cite[\S4.3, \S8.3, \S9.1]{MR396773}, \cite[II.2.2, II.2.5]{MR768181} and \cite[Ch.~1, \S2]{MR1064110}. Note that $\dim \mathfrak{g}_T$ is the dimension of a linear space, which indeed is easy to compute (for any concretely given $T$).

We will use the following standard notion.

\begin{definition}
	Let $T \in \CC^n \otimes \CC^n \otimes \CC^n$ be a tensor. Write $T = \sum_{i=1}^n e_i \otimes M_i$ so that $M_1,\dots,M_n$ are the slices of $T$ in direction $3$. (We can similarly define the slices in direction 1 and 2.) We call $T$ \emph{concise} if all of the following three conditions hold:
	\begin{itemize}
		\item[(1)] {$\rk (M_1; \dots ; M_n)=n$}
		\item[(2)] {$\rk \begin{pmatrix}
				M_1\\
				\vdots \\
				M_n
			\end{pmatrix}=n$}
		\item[(3)] {$\dim \linspan (M_1,\dots,M_n)=n$.}
	\end{itemize}
(Equivalently, conciseness means that all three so-called flattening ranks are maximal.)
\end{definition}

\begin{remark}[{\cite[Lemma~1.15]{MR4965400}}]
Every tensor is equivalent to a subtensor that is concise.
\end{remark}

\begin{remark}\label{rem:padding}
Let $m \geq n$, let $T \in \CC^n \otimes \CC^n \otimes \CC^n$ be concise, and let $S \in \CC^m \otimes \CC^m \otimes \CC^m$ be obtained from $T$ by padding with zeros. Then one can show that $\dim G_S = \dim G_T + 3m(m-n)$.
\end{remark}

We now consider several examples of tensors and their stabilizer dimensions.

\begin{example}[Diagonal]\label{ex:scalar}\label{ex:unit}
$T = \mathsf{E}_n \coloneqq \sum_{i=1}^n e_i \otimes e_i \otimes e_i \in \CC^n \otimes \CC^n \otimes \CC^n$. Then $\dim G_T = 2n$. Indeed $G_T$ consists of the triples of diagonal matrices $(\diag(a),\diag(b),\diag(c))$ with $a_ib_ic_i = 1$ for all $i$, extended by the diagonally embedded permutation matrices \cite[Theorem~4.3]{burgisser2015fundamentalinvariantsorbitclosures}.%
 Note that $\mathsf{E}_n$ is a direct sum of $n$ copies of $\mathsf{E}_1$. Generally, $\dim G_{S \oplus T} = \dim G_S + \dim G_T$ if $S$ and~$T$ are concise \cite[Lemma~4.2(i)]{Burgisser1997Degen-6409}.
\end{example}

\begin{example}[Matrix multiplication]\label{ex:matmul}
Let $T = \sum_{i,j,k=1}^m e_{i,j} \otimes e_{j,k} \otimes e_{k,i} \in \CC^{m^2} \otimes \CC^{m^2} \otimes \CC^{m^2}$. Writing $T$ as $\tr(XYZ)$, the group $G_T$ is the image of the homomorphism $\GL_m \times \GL_m \times \GL_m \to \GL_{m^2} \times \GL_{m^2} \times \GL_{m^2}$ sending $(A,B,C)$ to $(X \mapsto AXB^{-1},\ Y \mapsto BYC^{-1},\ Z \mapsto CZA^{-1})$ (sometimes called the ``sandwiching action''), a result of de Groote \cite{MR506377} (see also \cite[Theorem~4.5]{burgisser2015fundamentalinvariantsorbitclosures}). Its kernel is $\{(\lambda I, \lambda I, \lambda I) : \lambda \in \CC^*\}$, so $\dim G_T = 3m^2-1$ \cite[Lemma~4.2(ii)]{Burgisser1997Degen-6409}.%
\end{example}

\begin{example}[Null algebra tensor]\label{ex:null-algebra}
$T = \mathsf{N}_n \coloneqq e_1 \otimes e_1 \otimes e_1 + \sum_{i=2}^n (e_i \otimes e_i \otimes e_1 + e_i \otimes e_1 \otimes e_i) \in \CC^n \otimes \CC^n \otimes \CC^n$. Then $\dim G_T = n^2+1$, which follows from an explicit Lie algebra computation \cite[Proposition~3.2]{conner2021tensorsmaximalsymmetries}.
\end{example}

\begin{example}[Skew-symmetric tensor]\label{ex:skew}
	Let
    $T = \mathsf{D} \coloneqq e_1 \wedge e_2 \wedge e_3 = \sum_{\pi \in S_3} \sign(\pi)\, e_{\pi(1)} \otimes e_{\pi(2)} \otimes e_{\pi(3)} \in \CC^3 \otimes \CC^3 \otimes \CC^3.$
    Then $G_T$ contains $\{(\lambda g, \mu g, \nu g) : g \in \SL_3, \ \lambda \mu \nu = 1\}$, which has dimension $8 + 2 = 10$, and in fact  $\dim G_T = 10$ (e.g., \cite[Table~3]{MR1773770}, \cite[Remark~3.4]{conner2021tensorsmaximalsymmetries}, \cite[\S7.1]{MR4644081}). %
\end{example}

\subsection{Binding tensors}

Let $M_1,\dots,M_n$ be the slices of a tensor $T\in \CC^n \otimes \CC^n \otimes \CC^n$ in direction $i$, and $\mathcal{M}=\linspan(M_1,\dots,M_n)$. Then we define the $i$-th \emph{max-rank} of $T$ as $\Q_i(T)= \max \{ \rk M: M \in \mathcal{M} \}$.
We call a tensor $T\in \CC^n \otimes \CC^n \otimes \CC^n$ \emph{binding} if there are $i,j\in[3]$ distinct such that $\Q_i(T)=\Q_j(T)=n$. 
Binding tensors are %
concise \cite{conner2021tensorsmaximalsymmetries}. The term \emph{binding} is due to Bl\"aser--Lysikov \cite{blaeser-lysikov}. (Landsberg--Micha{\l}ek define a tensor $T$ to be $1_A$-generic if $\Q_1(T)=n$, with analogous definitions for $1_B$- and $1_C$-generic tensors \cite{MR3682743}. In this language, a binding tensor is a tensor that fulfills two of the three genericity conditions.) %

\begin{theorem}[Conner--Gesmundo--Landsberg--Ventura {\cite[Proposition~3.2]{conner2021tensorsmaximalsymmetries}}]\label{th:CGLV}
Let $T \in \CC^n \otimes \CC^n \otimes \CC^n$ be concise and binding. Then $\dim G_T \leq n^2 + 1$. Equality holds exactly when $T \cong \mathsf{N}_n$ or a permutation of it.
\end{theorem}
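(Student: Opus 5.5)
Since $T$ is binding, after permuting the three factors we may assume $\Q_1(T)=\Q_2(T)=n$. I will translate this into an algebra structure and then compute the stabilizer inside the automorphisms of that structure. Concretely, $\Q_1(T)=n$ means some slice combination in direction~1 is invertible. Acting by $\GL_n$ in factors 2 and 3, we can make that slice the identity. Then $T$ becomes the structure tensor of a bilinear map $\mathcal{A}\times\mathcal{A}\to\mathcal{A}$ with a left unit: the space $\mathcal{M}$ of direction-1 slices is an $n$-dimensional space of matrices containing $I$.

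The second condition $\Q_2(T)=n$ lets us also normalize on the other side. Following the Bl\"aser--Lysikov picture, $T\cong$ the structure tensor of a unital (not necessarily associative, but "binding") algebra $\mathcal{A}$ of dimension $n$. More precisely, $\mathcal{M}\subset\CC^{n\times n}$ is an $n$-dimensional subspace containing $I$ and consisting of matrices whose first columns (evaluated at $e_1$) span $\CC^n$; equivalently, $\mathcal{M}$ is $1$-generic in a second direction.

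Next I describe $\mathfrak{g}_T$ in these coordinates. A triple $(X,Y,Z)$ lies in $\mathfrak{g}_T$ exactly when the induced map $X$ on the slice-index space corresponds to $M\mapsto -(Y^{t}M+MZ)$ preserving $\mathcal{M}$. So $\dim\mathfrak{g}_T$ equals the dimension of the space of pairs $(Y,Z)$ with $Y^tM+MZ\in\mathcal{M}$ for all $M\in\mathcal{M}$; $X$ is then uniquely determined, using that the slices are linearly independent by conciseness.

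Now use $I\in\mathcal{M}$: we need $Y^t+Z=:A\in\mathcal{M}$, so $Z=A-Y^t$. The condition becomes $[Y^t,M]+MA\in\mathcal{M}$ for all $M\in\mathcal{M}$. Write $L=\{Y: [Y^t,\mathcal{M}]\subseteq\mathcal{M}+\mathcal{M}A \ \text{compatibly}\}$. The key counting step is the following. The map $(Y,A)\mapsto A$ has image in $\mathcal{M}$, of dimension at most $n$. Its fiber over $A=0$ is the normalizer $\{Y:[Y^t,\mathcal{M}]\subseteq\mathcal{M}\}$.

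I will bound the normalizer by $n^2-n+1$ using the second genericity. Evaluating at $e_1$ gives an isomorphism $\mathcal{M}\to\CC^n$, $M\mapsto Me_1$, so a derivation-like $Y^t$ is determined modulo scalars by less than full freedom. The expected refinement is to show the total is at most $n^2+1$. Indeed, any $D=\operatorname{ad}Y^t$ preserving $\mathcal{M}$ satisfies $[Y^t,M]e_1=Y^tMe_1-MY^te_1$, which is determined by $Y^t$ via a linear identity. This forces the constraints to cut out at least $n-1$ dimensions from $\CC^{n\times n}$ beyond the scalars.

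This counting is the main obstacle. To get exactly $n^2+1$, rather than the crude $n^2+n$, one must show the fiber and image dimensions trade off. I would try to prove that if $\dim\mathfrak{g}_T\ge n^2+1$, then the multiplication on $\mathcal{A}'=\{M\in\mathcal{M}:\ M\text{ nilpotent part}\}$ must vanish. The goal is that $\mathcal{M}=\CC I\oplus\{$matrices $v e_1^t$-type$\}$, i.e. the null algebra $\CC[x_2,\dots,x_n]/(x_ix_j)$ (with the $e_1$-row structure of $\mathsf{N}_n$).

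For equality, I expect a case analysis on the radical. If the algebra has nonzero products among radical elements, one exhibits an extra independent linear condition on $(Y,A)$, dropping the dimension to at most $n^2$. If all radical products vanish, $T\cong\mathsf{N}_n$, whose stabilizer dimension is $n^2+1$ by the explicit computation of \cref{ex:null-algebra}. Undoing the initial choice of directions $\{1,2\}$ gives the permutations of $\mathsf{N}_n$.
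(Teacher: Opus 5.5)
\textbf{There is a genuine gap: neither the bound $n^2+1$ nor the characterization of equality is established.}

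Your set-up is correct through the fibration $(Y,A)\mapsto A$. It gives $\dim\mathfrak{g}_T\le n+\dim\mathfrak{n}(\mathcal{M})$ with $\mathfrak{n}(\mathcal{M})=\{W:[W,\mathcal{M}]\subseteq\mathcal{M}\}$. So the inequality rests entirely on $\dim\mathfrak{n}(\mathcal{M})\le n^2-n+1$, and that is exactly the step you never prove:
\begin{itemize}
\item The identity $[Y^t,M]e_1=Y^tMe_1-MY^te_1$ holds for every $Y$, so it imposes no condition on $Y$ at all.
\item You then concede that this count is the ``main obstacle'' and appeal to a fiber/image trade-off. By your own fibration, no trade-off would be needed if the normalizer bound held.
\item The equality part is only a hope. $\mathcal{A}$ is a non-associative unital algebra, no radical is defined for it, and nothing shows that a nonzero product of ``radical'' elements forces an extra independent linear condition on $(Y,A)$.
\end{itemize}

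\textbf{The paper's route (following CGLV) uses degeneration instead of counting.} After permuting factors, a concise binding tensor is isomorphic, by Bl\"aser--Lysikov, to $\mathsf{N}_n+U$ with $U$ supported on $[n]\times\{2,\dots,n\}\times\{2,\dots,n\}$. This is precisely your unital algebra with unit $e_1$. Now scale $e_2,\dots,e_n$ by $t$ in the first factor and by $t^{-1}$ in the other two. This fixes $\mathsf{N}_n$ and multiplies each term of $U$ by $t^{-1}$ or $t^{-2}$. Letting $t\to\infty$ gives $\mathsf{N}_n\degenleq T$, and semicontinuity yields $\dim G_T\le\dim G_{\mathsf{N}_n}=n^2+1$. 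The same degeneration also gives the equality case. Equal stabilizer dimensions mean equal orbit dimensions, and $\overline{G\cdot T}\setminus G\cdot T$ contains only orbits of strictly smaller dimension, so $\mathsf{N}_n\in G\cdot T$.

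\textbf{Completing your own route for the inequality.} The normalizer bound has a short proof that uses only $I\in\mathcal{M}$, not the second genericity. Take a non-scalar $M\in\mathcal{M}$. Then $\mathfrak{n}(\mathcal{M})\subseteq\operatorname{ad}_M^{-1}(\mathcal{M})$, and
\[
\dim\operatorname{ad}_M^{-1}(\mathcal{M})=\dim C(M)+\dim(\operatorname{im}\operatorname{ad}_M\cap\mathcal{M})\le(n^2-2n+2)+(n-1).
\]
The first term is bounded because non-scalar matrices have centralizers of dimension at most $n^2-2n+2$. The second is bounded because $\operatorname{im}\operatorname{ad}_M\subseteq\mathfrak{sl}_n$ while $\tr I\neq 0$, so $\mathcal{M}\cap\mathfrak{sl}_n$ has dimension $n-1$.

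Even so, your route would still need a real classification for equality. Equality forces every element of $\mathcal{M}$ to be a scalar plus a matrix of rank at most one, and forces $(Y,A)\mapsto A$ to be onto $\mathcal{M}$. Your proposal does not supply that analysis.
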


The idea of the proof of \cref{th:CGLV} is that if $T \in \CC^n \otimes \CC^n \otimes \CC^n$ is concise and binding, then it is isomorphic to 
$S = \mathsf{N}_n + U$ %
for some tensor $U \in \CC^n \otimes \CC^n \otimes \CC^n$ supported on $[n] \times \{2,\ldots, n\} \times \{2, \ldots, n\}$ (this goes back to \cite{blaeser-lysikov});
and in turn, $S$ can be degenerated to the null algebra tensor~$\mathsf{N}_n$ (see also \cite{gesmundo-zuiddam}). Consequently, $\dim G_T \leq \dim G_{\mathsf{N}_n} = n^2+1$.

\subsection{Main result}

Conner, Gesmundo, Landsberg and Ventura state as an open problem to determine the largest stabilizer dimension without the binding assumption~\cite[Problem~3.5]{conner2021tensorsmaximalsymmetries}: ``Determine the largest possible dimension of the symmetry group of a concise
tensor. Furthermore, classify concise tensors with symmetry groups of maximal dimension.'' It was already noted in \cite[Remark~3.4]{conner2021tensorsmaximalsymmetries} that there is a non-binding tensor for $n=3$ that also achieves stabilizer dimension $n^2+1 = 10$, namely \cref{ex:skew}.
We solve \cite[Problem~3.5]{conner2021tensorsmaximalsymmetries}:

\begin{theorem}\label{th:main}
Let $T \in \CC^n \otimes \CC^n \otimes \CC^n$ be concise. Then $\dim G_T \leq n^2 + 1$. Equality holds exactly when
\begin{enumerate}[\upshape (1)]
\item $T \cong \mathsf{N}_n$ or a permutation of it, or
\item $T \cong \mathsf{D} \coloneqq e_1 \wedge e_2 \wedge e_3 \in \CC^3 \otimes \CC^3 \otimes \CC^3$.
\end{enumerate}
\end{theorem}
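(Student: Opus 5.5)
We reduce to the non-binding case and bound the stabilizer by studying the slice spaces directly. If $T$ is binding, \cref{th:CGLV} gives the bound and the equality characterization. So assume $T$ is concise and non-binding: at least two of $\Q_1(T),\Q_2(T),\Q_3(T)$ are $<n$. By permuting factors, take $\Q_1(T) < n$ and $\Q_2(T) < n$.

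The main tool is a bound for matrix tuples. Write $T=\sum_i e_i\otimes M_i$ with slices $M_1,\dots,M_n\in\CC^{n\times n}$ in direction $3$. The Lie algebra $\mathfrak g_T$ consists of triples $(X,Y,Z)$ with $X^{\top}$-type and $Y$-type left–right action on the tuple $(M_1,\dots,M_n)$, compensated by a linear recombination $Z$ of the slices. So
\[
\dim\mathfrak g_T \le \dim\{(X,Y): X\mathcal M+\mathcal M Y^{\top}\subseteq \mathcal M\},
\]
where $\mathcal M=\linspan(M_1,\dots,M_n)$. Since conciseness makes the $M_i$ linearly independent, $Z$ is determined by $(X,Y)$, so this is a genuine bound.

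Next we bound the stabilizer of the subspace $\mathcal M$ under the left–right action $(A,B)\cdot M=AMB^{-1}$. Here $\mathcal M$ is an $n$-dimensional space of $n\times n$ matrices, of bounded rank $r=\Q_3(T)$ or less, and it is concise (no common kernel or cokernel). The intended statement is:
\begin{quote}
for an $n$-dimensional concise space of matrices whose maximal rank is less than $n$ in the relevant directions, the Lie algebra $\{(X,Y): X\mathcal M-\mathcal M Y\subseteq\mathcal M\}$ has dimension at most $n^2+1$, with equality only for the skew space.
\end{quote}
To prove it, I would choose which direction to slice in so that the slices have max-rank $r<n$. For instance, slicing in direction $1$ gives a space of rank $\le r<n$. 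I would then use a structure result for spaces of bounded rank: either there is a large common kernel or image after a change of basis (the compression-space case), or the space is "primitive". The skew-symmetric $3\times3$ space is the prototype of the primitive case.

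In the compression case, $\mathcal M\subseteq\{M: M(U)\subseteq W\}$ with $\codim U+\dim W=r$. The stabilizer then splits into a block upper-triangular part. Its dimension can be counted as $\dim\mathfrak{gl}(U)$-type terms plus a homomorphism space, minus the constraints forced by conciseness: since every row and column must be used, the off-diagonal blocks cannot be too free. Optimizing the count over $(\dim U,\dim W)$ should give a bound strictly below $n^2+1$ unless degeneracies occur. Those degeneracies should themselves force $T$ to be binding in another direction, contradicting the assumption. The primitive case should only occur for small $n$, or should be handled via a degeneration argument. Using property (4) of the lemma, degenerate $T$ within its orbit closure to a tensor of simpler (graded or torus-fixed) form; this does not decrease the stabilizer dimension, and conciseness is preserved if the degeneration is chosen via a one-parameter subgroup respecting the flags. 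Then analyze the limit directly. For $n=3$ the skew tensor $\mathsf D$ appears with $\dim=10$; for $n\ge4$, compare with $n^2+1$ by the count.

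For the equality case, trace back: equality forces equality in every inequality used. In the binding case this gives $\mathsf N_n$ by \cref{th:CGLV}. In the non-binding case the count must be tight, and the bounded-rank space is then forced to be the $3\times 3$ skew space, giving $T\cong\mathsf D$. Note that \cref{rem:padding} shows padding increases the dimension only by $3m(m-n)$, so a skew piece cannot be padded to reach $m^2+1$ when $m>3$.

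The main obstacle is the matrix-space stabilizer bound in the non-compression (primitive bounded-rank) case. Here no general classification exists. I would first try induction on $n$: pick a generic rank-$r$ matrix in $\mathcal M$, normalize it to $\begin{pmatrix} I_r&0\\0&0\end{pmatrix}$, and use the Atkinson–Lloyd type constraints on the other elements. This bounds the stabilizer by $\dim$ of the stabilizer of that normal form intersected with the compatibility conditions, and those conditions should reduce to a smaller concise instance.
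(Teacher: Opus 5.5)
There is a genuine gap: your reduction to the non-binding case via \cref{th:CGLV} is fine, but after that the proposal is a program rather than a proof, and the step you defer is the whole content of the theorem.

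\textbf{Where the argument stalls.} The inequality $\dim\mathfrak g_T\le\dim\{(X,Y): XM+MY^{\top}\in\mathcal M\ \text{for all}\ M\in\mathcal M\}$ is actually an equality, since every such $(X,Y)$ extends to a unique $Z$. So it only restates the problem. The compression-case count, the claim that its degenerate cases force bindingness, and the treatment of the non-compression case are all asserted with ``should''.

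\textbf{The heuristics are also wrong.} Non-compression spaces do not only occur for small $n$. For every odd $n=2k+1$, take $T=(e_1\wedge e_2+\cdots+e_{2k-1}\wedge e_{2k})\wedge e_n$, which is $\mathsf D$ when $k=1$. It is concise and non-binding, because all its slices are skew-symmetric of rank at most $n-1$. A direct check shows its slice space is not equivalent to a subspace of any $\Comp_{a,b}$ with $a+b<n$. Since bounded-rank spaces are classified only up to rank $4$, a case analysis along these lines cannot be completed.

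\textbf{The degeneration fallback does not help.} A limit $\lim_{t\to0}\lambda(t)\cdot T$ need not be concise, and you give no reason yours would be. For non-concise tensors the bound $n^2+1$ is simply false. Your padding remark has the inequality backwards: by \cref{rem:padding}, padding $\mathsf D$ to size $m>3$ gives $10+3m(m-3)>m^2+1$. It is irrelevant anyway, since padded tensors are not concise.

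\textbf{The missing idea.} No classification is needed once $q\ge3$, because there is a uniform, classification-free bound $\dim G_T\le n^2+1-(q-2)(n-q)$ for any max-rank $q<n$. The paper proves it in four steps.
\begin{enumerate}[(1)]
\item Choose the slice basis so that the column and row profiles $c_k,r_k$ are non-increasing, with $c_1=r_1=q$.
\item Replace the subspace stabilizer by the \emph{exact} tuple stabilizer $H=\{(X,Y): XM_i+M_iY^{\top}=0,\ i\le d\}$. This costs $nd$ dimensions for the first $d$ rows of $Z$, giving $\dim\mathfrak g_T\le nd+\dim H$.
\item Filter $H$ by $F_k=\{(X,Y)\in H: XU_k=0\}$. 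The map $(X,Y)\mapsto XM_{k+1}$ sends $F_k/F_{k+1}$ injectively into matrices supported on an $r_{k+1}\times c_{k+1}$ block. At $k=0$ there is the sharper bound $q^2-r_2(q-r_2)\le q^2-(q-1)$. This is where your idea of normalizing $M_1=I_q\oplus 0$ actually pays off.
\item Finish with the numerics $d\le n-q+1$ and $\sum_i r_ic_i\le n+q(n-d)$.
\end{enumerate}
This gives $\dim G_T\le n^2$ whenever $3\le q<n$. So for non-binding $T$ everything reduces to $q=2$, where \cref{lem:max-rank-equals-2} (via the rank-$2$ classification) gives $T\cong\mathsf D$. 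The case $q\le 1$ is excluded by \cref{lem:max-rank-at-least-2}.
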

The proof of \cref{th:main} is necessarily very different from that of \cref{th:CGLV}, since a non-binding $T \in \CC^n \otimes \CC^n \otimes \CC^n$ cannot degenerate to the binding tensor $\mathsf{N}_n$ (since max-rank cannot go up in the limit). The idea of our proof is naturally to consider the Lie algebra 
\[
\mathfrak{g}_T \coloneqq \{ (X,Y,Z) \in (\CC^{n \times n})^{\times 3} : (X \otimes I \otimes I + I \otimes Y \otimes I + I \otimes I \otimes Z)\cdot T = 0 \}
\]
which we phrase in terms of the slices $M_1, \ldots, M_n$ of $T$ in direction 3 as
\[
\mathfrak{g}_T = \{(X,Y,Z) : \forall i \in [n], X M_i + M_i Y^T + \sum_{j=1}^n Z_{ij}M_j = 0\}.
\]
After that, we simplify the problem by ``getting rid of the action in third direction'', by proving the upper bound $\dim \mathfrak{g}_T \leq nd + \dim H$ where 
\[
H = \{(X,Y) : \forall i \in [n], X M_i + M_i Y^T + \sum_{j=1}^n Z_{ij}M_j = 0\}.
\]
This is in fact the Lie algebra for the stabilizer of the matrix tuple $(M_1, \ldots, M_d$ under left-right action.
We discuss the idea of how we bound $\dim H$ in \cref{subsec:matrix-tuples}.

\subsection{Stabilizer of matrix tuples via rank profiles}\label{subsec:matrix-tuples}

We prove a statement about tuples of matrices that may be of independent interest. For a matrix tuple $(M_1, \ldots, M_d) \in (\CC^{n\times n})^d$ let
\[
G_{M_1, \ldots, M_d} \coloneqq \{(P,Q) \in \GL_n \times \GL_n : \forall i \in [d], P M_i Q^T = M_i\}
\]
be the stabilizer of the tuple. Let $q$ be the largest rank of any element of the span of $M_1, \ldots, M_d$. 
We consider the $n \times nk$ matrix $M_1 ; M_2 ; \cdots M_k$ obtained by horizontally concatenating the $M_1, \ldots, M_k$ and define the numbers $c_k \coloneqq \rk(M_1 ; M_2 ; \cdots M_k) - \rank(M_1 ; M_2 ; \cdots M_{k-1})$. And we similarly define $r_k$ by concatenating vertically instead. 
We prove the following:

\begin{theorem}\label{th:matrix-tuple}
Let $M_1, \ldots, M_d \in \CC^{n\times n}$ satisfy $\sum_{i=1}^d \col M_i = \sum_{i=1}^d \row M_i = \CC^n$. Suppose $q < n$, the profiles $r_1, \ldots, r_d$ and $c_1, \ldots, c_d$ are non-increasing, and $r_1 = c_1 = q$. Then
\[
\dim G_{M_1, \ldots, M_d} \leq \sum_{i=1}^d r_i c_i - r_2(q-r_2).
\]
\end{theorem}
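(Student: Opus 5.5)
We work with the Lie algebra: $\dim G_{M_1,\dots,M_d}=\dim\mathfrak h$, where
\[
\mathfrak h=\{(X,Y): XM_i+M_iY^T=0 \ \forall i\in[d]\}.
\]
The plan is to choose bases adapted to the column filtration $C_k=\sum_{i\le k}\col M_i$ and the row filtration $R_k=\sum_{i\le k}\row M_i$, and then to bound $\mathfrak h$ by a filtration argument.

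\textbf{Normal form.} Because $r_1=c_1=q$, the matrix $M_1$ attains the maximal rank $q$ on the span. We first put $M_1=\begin{pmatrix} I_q&0\\0&0\end{pmatrix}$. By the hypothesis $\sum_i\col M_i=\CC^n$, the flag $C_1\subseteq C_2\subseteq\cdots\subseteq C_d=\CC^n$ has successive jumps $c_k$, and similarly the flag $R_k$ has jumps $r_k$. We then choose bases compatible with both flags, so that $M_k$ lives in the first $c_1+\dots+c_k$ rows and the first $r_1+\dots+r_k$ columns. Block-decompose $X$ according to the column flag and $Y$ according to the row flag.

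\textbf{Counting.} Fix $(X,Y)\in\mathfrak h$ and read off the equations $XM_k+M_kY^T=0$ block by block. Since $M_k$ maps onto the new piece $C_k/C_{k-1}$, the equation for $M_k$ determines the block column of $X$ acting on level $k$ from the remaining data, modulo the freedom already counted. A triangular (filtration) argument should then show that $\mathfrak h$ injects into a space of dimension $\sum_k r_kc_k$. Heuristically, each $M_k$ contributes at most $r_k c_k$ free parameters, namely the unconstrained part of the pair of blocks linking the new row piece and the new column piece. The monotonicity of $r_k$ and $c_k$ is what makes this triangular bookkeeping consistent: every new piece has dimension at most that of the previous piece, so the relevant restriction maps are defined on spaces that are large enough. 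For instance, for $d=1$ the bound is $q^2$, which should match the stabilizer of $(M_1)$ restricted appropriately once conciseness is used.

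\textbf{The correction term $r_2(q-r_2)$.} This must come from the interaction between $M_1$ and $M_2$ together with the hypothesis $q<n$. Since every element of $\linspan(M_1,\dots,M_d)$ has rank at most $q<n$, consider $\rk(M_1+tM_2)\le q$ for all $t$. Expanding in the normal form with $M_2=\begin{pmatrix}A&B\\C&D\end{pmatrix}$ gives $D=0$ and $CB=0$ (to first order, $D-C(I+tA)^{-1}Bt\,\cdots$). Here $C$ has rank $c_2$ and $B$ has rank $r_2$ (or the reverse, depending on convention), and $CB=0$ forces strong compatibility. The plan is to show that the block of $X$ (respectively $Y$) of size $r_2\times(q-r_2)$ that couples $\im B$ inside $\CC^q$ with its complement is forced to vanish, or to be determined by other blocks. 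In the naive count above, this block was counted as free, so removing it yields the subtraction $r_2(q-r_2)$.

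The main obstacle is this last step: making precise which $r_2(q-r_2)$ parameters are killed, and verifying that they were genuinely counted in $\sum_k r_kc_k$ rather than already excluded by the triangular argument. My first attempt is to compare the equations for $M_1$ and $M_2$ restricted to $\CC^q$. The $M_1$-equation gives $X_{11}=-Y_{11}^T$ on the $q\times q$ block. The $M_2$-equation, through the $B$- and $C$-blocks with $CB=0$, then forces $X_{11}$ to preserve the subspace $\im B\subseteq \CC^q$. The condition that $X_{11}$ preserves a subspace of dimension $r_2$ in $\CC^q$ removes exactly $r_2(q-r_2)$ dimensions.
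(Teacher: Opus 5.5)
Your correction-term argument is the right idea, and it is exactly what the paper does. With $M_1=\begin{pmatrix} I_q&0\\0&0\end{pmatrix}$ and $M_2=\begin{pmatrix}A&B\\C&0\end{pmatrix}$, the $M_1$-equation forces $X_{21}=0$, $(Y^T)_{12}=0$ and $X_{11}=-(Y^T)_{11}$. The $(1,2)$ block of $XM_2+M_2Y^T=0$ then reads $X_{11}B=-B(Y^T)_{22}$, so $X_{11}$ preserves the $r_2$-dimensional space $\im B$. Only $D=0$ is needed here; $CB=0$ plays no role.

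The genuine gap is the main count $\dim\mathfrak h\le\sum_k r_kc_k$. You only assert it (``should then show'', ``heuristically''), and the mechanism you offer is not the right one: monotonicity of the profiles is not used anywhere in this proof. Tracking block columns of $X$ level by level, as you describe, only constrains the level-$k$ data on the column side. That bounds it by an $n\times c_k$ block, which sums to $n^2$. The restriction to $r_k$ rows has to come from the $Y$ side, and your sketch never says how.

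The missing idea is a precise filtration with two-sided annihilation:
\begin{itemize}
\item Let $F_k=\{(X,Y)\in\mathfrak h: XC_k=0\}$ and $\theta_k\colon F_k\to\CC^{n\times n}$, $(X,Y)\mapsto XM_{k+1}$. Then $\ker\theta_k=F_{k+1}$.
\item $F_d=0$: $X=0$ forces $M_iY^T=0$ for all $i$, hence $Y=0$ because the rows of the $M_i$ span $\CC^n$. Thus $\dim\mathfrak h=\sum_k\dim\im\theta_k$.
\item On $F_k$ we have $XM_i=0$, hence $M_iY^T=0$, for all $i\le k$. So $Y^T$ kills $R_k$ as well.
\item Consequently $N=XM_{k+1}=-M_{k+1}Y^T$ vanishes on $\{v: M_{k+1}v\in C_k\}$, which has codimension $c_{k+1}$. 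It is also annihilated on the left by $\{w: wM_{k+1}\in R_k\}$, which has codimension $r_{k+1}$.
\item Therefore $\im\theta_k$ lies in an $r_{k+1}\times c_{k+1}$ block.
\end{itemize}

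This also dissolves what you call the main obstacle. The first graded piece $\im\theta_0$ is exactly the set of matrices $\begin{pmatrix}X_{11}&0\\0&0\end{pmatrix}$, whose naive bound is $q^2=r_1c_1$. The invariance of $\im B$ lowers it to $q^2-r_2(q-r_2)$. The bound $\dim F_1\le\sum_{k\ge2}r_kc_k$ is unaffected, and $\dim\mathfrak h=\dim\im\theta_0+\dim F_1$, so there is no double counting.
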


The assumption that $r_1, \ldots, r_d$ and $c_1, \ldots, c_d$ are non-increasing and $r_1 = c_1 = q$ is satisfied by a generic choice of basis in any $d$-dimensional space of matrices. (For details see \cref{sec:profiles}.)

We prove \cref{th:matrix-tuple} through the Lie algebra of $G_{M_1, \ldots, M_d}$, which has the same dimension and is the aforementioned space $H$.
(see also \cref{sec:filtration}).

In the case of $d = 2$, a pair $(M_1, M_2) \in \CC^{n \times n}$ is often called a matrix pencil, and here the dimension $G_{M_1, M_2}$ is known exactly by work of Demmel and Edelman \cite{MR1355688} using the Kronecker normal form. %
A $d$-tuple of $n \times n$ matrices is studied as a representation of the generalized Kronecker quiver with $d$ arrows and dimension vector $(n,n)$, and $G_{M_1, \ldots, M_d}$ is its automorphism group \cite[Example~9.8.6]{MR3727119}. 
Quiver theory describes the generic value of the stabilizer dimension \cite{MR1162487}. %

\subsection{Related notions and work}
\label{rem:conventions}
For any tensor $T \in \CC^n \otimes \CC^n \otimes \CC^n$ we defined the stabilizer as the subgroup $G_T \coloneqq\{ g : g\cdot T = T \} \subseteq G \coloneqq \GL_n  \times \GL_n \times \GL_n$. It is worth pointing out two natural variations on this definition that have been studied in the literature, the first essentially equivalent to ours (by a fixed shift in dimension), and the second not equivalent to ours:

\begin{enumerate}[(1)]
\item 
\emph{Quotienting trivial symmetries.}
For every tensor $T \in \CC^n \otimes \CC^n \otimes \CC^n$, the stabilizer~$G_T$ contains the two-dimensional subgroup 
\[
K \coloneqq \{(\lambda \id, \mu \id, \nu \id) \in G : \lambda,\mu,\nu \in \CC, \lambda\mu\nu = 1\}.
\]
Some authors (e.g.,~\cite{conner2021tensorsmaximalsymmetries, MR3729273}) consider the elements of $K$ explicitly as trivial symmetries and thus focus on the quotient group~$G_T / K$ instead of $G_T$. 
The dimensions are simply related by $\dim G_T / K = \dim G_T - 2$, so this notion is equivalent to ours up to a constant shift in dimension.

\item
\emph{Special linear groups.}
Instead of considering the stabilizer $G_T \subseteq \GL_n \times \GL_n \times \GL_n$, some authors consider the subgroup $G'_T \coloneqq \{g : g\cdot T = T\} \subseteq \SL_n \times \SL_n \times \SL_n$, where~$\SL_n$ is the group of matrices of determinant one
(e.g., Nurmiev \cite{MR1773770, MR1781073} in his classification of the $\SL_3 \times \SL_3 \times \SL_3$-orbits in $\CC^3 \otimes \CC^3 \otimes \CC^3$). Unlike for the above $G_T / K$, here the dimension $\dim G'_T$ is not a constant shift of our $\dim G_T$. Indeed, $\dim G'_T = \dim G_T - \delta(T)$, where $\delta(T)$ is the dimension of the image of $G_T$ under $(\det, \det, \det)$. This image is a subgroup of $(\CC^*)^3$, so $\delta(T) \leq 3$, and it always contains the image of $K$, which is the two-dimensional subgroup $\{(a,b,c) \in (\CC^*)^3 : abc = 1\}$, so $\delta(T) \geq 2$. Hence $\delta(T) \in \{2,3\}$ and indeed $\delta(T)$ is not constant. For instance, $\delta(\mathsf{D}) = 2$ while $\delta(\mathsf{N}_n) = 3$. %
\end{enumerate}

\section{Matrix subspaces of bounded rank}\label{sec:matrix-subspaces}

Linear spaces of matrices of bounded rank have been studied since Dieudonn\'e \cite{MR29360} and Flanders \cite{MR136618}. For bounded rank up to $4$ they have been classified by Westwick \cite{MR296081} (bounded rank $1$), Atkinson--Lloyd \cite{MR587090, MR621563} (bounded rank $2$), Atkinson \cite{MR695915} (bounded rank $3$), and Eisenbud--Harris \cite{MR954659} (bounded rank $2$ and $3$, with different arguments that work only over algebraically closed fields), and Huang--Landsberg \cite{MR5044121} (bounded rank $4$).

For $a,b \geq 0$ let $\Comp_{a,b} \subseteq \CC^{n \times n}$ (``compression space'') be the subspace of all $n \times n$ matrices of the block form
\[
\begin{bmatrix} A & B \\ C & 0 \end{bmatrix}
\]
with $A$ of size $a \times b$, that is, the matrices supported on the first $a$ rows and the first $b$ columns. %
Every element of $\Comp_{a,b}$ has rank at most $a+b$. Let $\Alt \subseteq \CC^{n\times n}$ (``skew-symmetric matrices'') be the subspace of  matrices
\[
\begin{bmatrix} S & 0 \\ 0 & 0 \end{bmatrix},\, \textnormal{where }  
S = \begin{bsmallmatrix} \!\phantom{-}0 & \phantom{-}x & \phantom{-}y \\ \!-x & \phantom{-}0 & \phantom{-}z \\ \!-y & -z & \phantom{-}0 \end{bsmallmatrix},\, x,y,z \in \CC.
\]
Every element in $\Alt$ has rank at most two.
Subspaces $V, W \subseteq \CC^{n\times n}$ are called \emph{equivalent} if $W = \{PMQ : M \in V\}$ for some $P,Q \in \GL_n$.

We first use the classical classification of matrix subspaces of bounded rank 1.

\begin{lemma}[Westwick {\cite[Theorem~4.2]{MR296081}}, Atkinson--Lloyd {\cite[Lemma~2]{MR621563}}]\label{lem:bounded-rank-one}
	Let $V \subseteq \CC^{n\times n}$ be a subspace of matrices all of which have rank $\leq1$. Then, up to equivalence,
	\[
	V \subseteq \Comp_{1,0} \quad\text{or}\quad V\subseteq\Comp_{0,1}.
	\]
\end{lemma}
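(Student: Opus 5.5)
This is a classical result, and I will prove it directly from the rank-one condition without other input. The key fact is that a sum of two rank-one matrices $u v^T + u' v'^T$ has rank $\leq 1$ only under restrictive conditions. If $u,u'$ are linearly independent and $v,v'$ are linearly independent, then the sum has rank exactly $2$, since its column space contains both $u$ and $u'$.

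So I fix two nonzero elements $A = uv^T$ and $B = u'v'^T$ of $V$. Since $A+B \in V$, the previous observation shows that either $u \parallel u'$ or $v \parallel v'$. In words: any two nonzero elements of $V$ share a common column space or a common row space.

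Now suppose $V$ is not contained in a space of the form $\{u w^T : w\}$, i.e.\ not all column spaces coincide. Pick $A = uv^T$ and $B = u'v^T$ with $u \not\parallel u'$; they must share the row vector $v$ by the dichotomy. Let $C = xy^T \in V$ be arbitrary and nonzero. Suppose $y \not\parallel v$. Pairing $C$ with $A$ forces $x \parallel u$, and pairing $C$ with $B$ forces $x \parallel u'$. This contradicts $u \not\parallel u'$. Hence $y \parallel v$ for every element of $V$, so $V \subseteq \{w v^T : w \in \CC^n\}$.

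In the remaining case, $V \subseteq \{u w^T : w \in \CC^n\}$. Choosing $P$ (resp.\ $Q$) invertible mapping $u$ to $e_1$ (resp.\ $v$ to $e_1$), we move $V$ into the matrices supported on the first row, which is $\Comp_{1,0}$, or supported on the first column, which is $\Comp_{0,1}$. The case $V=0$ is trivial.

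There is no real obstacle here. The only step that needs care is the three-element argument that upgrades the pairwise dichotomy to a global one.
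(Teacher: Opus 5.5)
Your proof is correct. The paper itself gives no proof of this lemma: it imports it by citation from Westwick and Atkinson--Lloyd, just as it does for the rank-two classification in \cref{lem:bounded-rank-two}. Your argument therefore replaces a reference with a self-contained proof, rather than offering an alternative to an argument in the text.

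What your route buys is brevity and generality. It uses only that $V$ is closed under addition, through the rank of $uv^T+u'v'^T$, and it works verbatim over any field. Your three-element step is exactly what is needed, because the pairwise relation ``same column space or same row space'' is not transitive on its own. The citation route, by contrast, keeps the section uniform with the rank-two case, where no comparably short argument is available (the $\Alt$ case shows genuinely new behaviour) and the literature is really required.

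One cosmetic point: since $Q$ acts on the right, in the row-space case you want $Q^{T}v=e_1$ (equivalently $v^{T}Q=e_1^{T}$). Then $P(wv^{T})Q=(Pw)e_1^{T}$ is supported on the first column and lies in $\Comp_{0,1}$.
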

From the matrix subspace classification \cref{lem:bounded-rank-one} one obtains almost directly the following tensor theorem.
\begin{lemma}\label{lem:max-rank-at-least-2}
	Let $T \in \CC^n \otimes \CC^n \otimes \CC^n$ concise, $n\geq 2$. Then $\subrank_1(T), \subrank_2(T), \subrank_3(T) \geq 2$. %
\end{lemma}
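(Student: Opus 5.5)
By symmetry of the three directions (permuting tensor factors preserves conciseness and permutes the $\subrank_i$), it suffices to show $\subrank_3(T) \geq 2$. The other two follow by applying the same argument to the permuted tensors. I argue by contradiction, using \cref{lem:bounded-rank-one}.

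Let $M_1,\dots,M_n$ be the slices of $T$ in direction 3 and $\mathcal{M} = \linspan(M_1,\dots,M_n)$. Conciseness condition (3) gives $\dim \mathcal{M} = n \geq 2$, so $\mathcal{M} \neq 0$ and $\subrank_3(T) \geq 1$. Suppose $\subrank_3(T) \leq 1$, so every element of $\mathcal{M}$ has rank at most $1$. By \cref{lem:bounded-rank-one}, there are $P,Q \in \GL_n$ with $P\mathcal{M}Q \subseteq \Comp_{1,0}$ or $P\mathcal{M}Q \subseteq \Comp_{0,1}$.

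Replacing $T$ by $(P, Q^T, I)\cdot T \cong T$ leaves conciseness unchanged, since the flattening ranks are invariant under the group action. So we may assume $\mathcal{M} \subseteq \Comp_{1,0}$ or $\mathcal{M} \subseteq \Comp_{0,1}$.
\begin{itemize}
\item If $\mathcal{M} \subseteq \Comp_{1,0}$, every $M_i$ is supported on the first row. Then the vertical concatenation of $M_1,\dots,M_n$ has all its nonzero rows among the first rows of the $M_i$. Its row space is therefore spanned by vectors in a space whose column support... more precisely, its column rank equals the dimension of the span of the $M_i$ restricted appropriately. The cleaner route is the horizontal concatenation $(M_1;\dots;M_n)$: it has only one nonzero row, so its rank is at most $1 < n$. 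This contradicts condition (1).
\item If $\mathcal{M} \subseteq \Comp_{0,1}$, every $M_i$ is supported on the first column. Then the vertical concatenation has only one nonzero column, so its rank is at most $1 < n$. This contradicts condition (2).
\end{itemize}
Hence $\subrank_3(T) \geq 2$.

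The only point requiring care is matching which of the conditions (1) and (2) corresponds to row-support versus column-support. Either way, one of the two flattenings has rank at most $1 < n$, so the contradiction holds regardless of that bookkeeping.
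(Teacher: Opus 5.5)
Your proof is correct and follows the same route as the paper. Both invoke \cref{lem:bounded-rank-one} to get $\mathcal{M}\subseteq\Comp_{1,0}$ or $\mathcal{M}\subseteq\Comp_{0,1}$. Then the horizontal (resp.\ vertical) concatenation of the slices has rank at most $1<n$, which violates conciseness; this is exactly the detail the paper leaves as ``we see that $T$ is not concise''. You should delete the abandoned sentence about the vertical concatenation in the first bullet, since the horizontal-concatenation argument that follows it is the one that works.
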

\begin{proof}
	Let $V$ be the slice span of $T$ in some direction $i$. If $\subrank_i(T) \leq 1$, then by \cref{lem:bounded-rank-one}, up to equivalence, $V \subseteq \Comp_{1,0}$ or $V \subseteq \Comp_{0,1}$. Then we see that $T$ is not concise.
\end{proof}

Next we use the classical classification of matrix subspaces of bounded rank 2:

\begin{lemma}[Atkinson--Lloyd {\cite[Lemma~7]{MR621563}}, Eisenbud--Harris {\cite[Theorem~1.1]{MR954659}}]\label{lem:bounded-rank-two}
	Let $V \subseteq \CC^{n \times n}$ be a subspace of matrices all of which have rank $\leq 2$. Then, up to equivalence,
	\[
	V \subseteq \Comp_{2,0} \quad\text{or}\quad V \subseteq \Comp_{0,2} \quad\text{or}\quad V \subseteq \Comp_{1,1} \quad\text{or}\quad V = \Alt.
	\]
\end{lemma}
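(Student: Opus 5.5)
The plan is to reduce to the case where the maximal rank equals $2$ and then work with a normal form around a generic element. If every element of $V$ has rank $\leq 1$, then \cref{lem:bounded-rank-one} applies, and $\Comp_{1,0}\subseteq\Comp_{2,0}$, $\Comp_{0,1}\subseteq\Comp_{0,2}$, so we are done. Assume therefore that some $A\in V$ has rank $2$. Acting by $\GL_n\times\GL_n$, we may take $A=\begin{bsmallmatrix} I_2 & 0\\ 0 & 0\end{bsmallmatrix}$. Write every $B\in V$ in the matching block form $B=\begin{bsmallmatrix} B_{11} & B_{12}\\ B_{21} & B_{22}\end{bsmallmatrix}$, with $B_{12}$ of size $2\times(n-2)$ and $B_{21}$ of size $(n-2)\times 2$. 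The condition $\rk(A+tB)\leq 2$ for all $t$ says that the Schur complement of the (invertible, for small $t$) upper-left block vanishes:
\[
tB_{22}-t^2B_{21}(I_2+tB_{11})^{-1}B_{12}=0 .
\]
Expanding in powers of $t$ gives the identities $B_{22}=0$, $B_{21}B_{12}=0$, $B_{21}B_{11}B_{12}=0$, and so on. Polarizing in $B$ yields $B_{21}C_{12}+C_{21}B_{12}=0$ for all $B,C\in V$.

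Next, consider the subspaces $\mathcal{L}=\{B_{21}:B\in V\}\subseteq\CC^{(n-2)\times 2}$ and $\mathcal{R}=\{B_{12}:B\in V\}\subseteq\CC^{2\times(n-2)}$. If $\mathcal{L}=0$, every element of $V$ is supported on the first two rows, so $V\subseteq\Comp_{2,0}$. Symmetrically, if $\mathcal{R}=0$ then $V\subseteq\Comp_{0,2}$. In the remaining case both are nonzero, and we exploit the polarized relation.

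Take $B$ with $B_{21}\neq 0$ and $C$ with $C_{12}\neq 0$. The relation $B_{21}C_{12}=-C_{21}B_{12}$, together with $B_{21}B_{12}=0$, constrains the column spaces of all $B_{21}$ and the row spaces of all $B_{12}$. I would aim to show that one of two situations occurs.
\begin{enumerate}[(a)]
\item There is a fixed vector $u\in\CC^2$ with $B_{21}u=0$ for all $B$, and a fixed vector $w\in\CC^{n-2}$ spanning the row spaces of all $B_{12}$. After a base change in the first two coordinates and in the last $n-2$ coordinates, every element of $V$ is then supported on one row and one column (the first row and the first column, up to reordering), which gives $V\subseteq\Comp_{1,1}$.
\item Both $\mathcal{L}$ and $\mathcal{R}$ are "rank one in a twisted way", meaning all $B_{21}$ have column space in a single line $\CC v$ and all $B_{12}$ have row space in that same dual line, with $B_{21}$ and $B_{12}$ linked by an antisymmetric relation. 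Then after a base change everything is supported on the first $3\times 3$ block. The relation $B_{21}C_{12}+C_{21}B_{12}=0$, combined with the vanishing of $\det$ on that block (maximal rank $2$), should force the block to be skew-symmetric up to equivalence. Using the higher-order identity $B_{21}B_{11}B_{12}=0$ to control $B_{11}$, this gives $V=\Alt$; here one needs $V$ to fill all of $\Alt$, or otherwise it already lies in a compression space.
\end{enumerate}

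The main obstacle I expect is this mixed case, where both $\mathcal{L}$ and $\mathcal{R}$ are nonzero. In particular one must show that the constraints genuinely force either a common kernel vector or the three-dimensional skew configuration, and one must handle the possibility that $B_{11}$ interacts nontrivially. Here I would first try to use genericity of $A$: replacing $A$ by $A+\varepsilon B$ must again give rank $\leq 2$ normal forms. I would also do a careful linear-algebra analysis of pairs of subspaces $\mathcal{L},\mathcal{R}$ satisfying the bilinear relation, splitting by $\dim$ of the span of the column spaces of elements of $\mathcal{L}$ (which is $1$ or $\geq 2$). Alternatively, I could defer to the cited classification of Atkinson--Lloyd and Eisenbud--Harris for the final identification.
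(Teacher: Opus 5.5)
The paper gives no proof of this lemma; it quotes it from Atkinson--Lloyd and Eisenbud--Harris. Your fallback of deferring to those references is therefore exactly the paper's route. As a self-contained argument, however, your sketch has a genuine gap.

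Everything before the mixed case is correct: the normalization $A=\diag(I_2,0)$, the identities $B_{22}=0$ and $B_{21}B_{11}^kB_{12}=0$, the polarized relation, and the cases $\mathcal L=0$ or $\mathcal R=0$. But that is the easy part. The mixed case carries the whole content of the classification, and there you only say what you would aim to show. Moreover, the dichotomy you aim for is false as stated. In (a) the fixed line must be the \emph{column} space of every $B_{12}$, and it must be the same line $\CC u$ that every $B_{21}$ annihilates. A fixed row space $\CC w\subseteq\CC^{n-2}$ is the wrong condition.

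Here is a counterexample. Let $n=4$, $V=\Comp_{1,1}$ and $A=e_1e_2^{T}+e_2e_1^{T}$; every rank-$2$ element of $V$ is equivalent to this one by an equivalence preserving $V$. After swapping the first two columns you get $B_{12}=e_1w_B^{T}$ and $B_{21}=y_Be_2^{T}$, with $w_B,y_B\in\CC^2$ arbitrary. The equivalences fixing $\diag(I_2,0)$ act by $B_{21}\mapsto P_{22}B_{21}Q_{11}$ and $B_{12}\mapsto P_{11}B_{12}Q_{22}$. So no further normalization produces a common row space for the $B_{12}$ or a common column space for the $B_{21}$. Neither (a) nor (b) holds, although $V=\Comp_{1,1}$.

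The idea that closes the mixed case is that no $B_{21}$ and no $B_{12}$ can have rank $2$. A rank-$2$ $B_{21}$ is injective, so $B_{21}B_{12}=0$ gives $B_{12}=0$. Then $B_{21}C_{12}=-C_{21}B_{12}=0$ gives $C_{12}=0$ for every $C$, i.e.\ $\mathcal R=0$; the argument for $B_{12}$ is symmetric. Hence $\mathcal L$ and $\mathcal R$ consist of matrices of rank at most $1$, and \cref{lem:bounded-rank-one} gives each a common row space or a common column space.

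Substitute the four combinations into $B_{21}C_{12}+C_{21}B_{12}=0$, first taking $B=C$ with $B_{21}\neq 0$, respectively $B_{12}\neq 0$. Three of the combinations force a line $\CC u\subseteq\CC^2$ with $B_{21}u=0$ and $\col B_{12}\subseteq\CC u$ for all $B\in V$. This is the correct version of (a). To reach $\Comp_{1,1}$ you still need $B_{11}u\in\CC u$. With $u=e_1$, $B_{21}=y_Be_2^{T}$ and $B_{12}=e_1w_B^{T}$, the cubic identity reads $(B_{11})_{21}\,y_Bw_B^{T}=0$. If $(B_{11})_{21}$ did not vanish identically on $V$, the nonempty Zariski-open sets where $y_B$, $w_B$ and $(B_{11})_{21}$ are nonzero would intersect, a contradiction.

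The fourth combination is $B_{21}=y\,\ell_B^{T}$ and $B_{12}=v_B\,w^{T}$ with $y,w$ fixed. It confines $V$ to a $3\times 3$ block and makes $(B,C)\mapsto\ell_B^{T}v_C$ alternating. If this form vanishes you are again in $\Comp_{1,1}$. If it does not, you must still show three things:
\begin{enumerate}
\item via the polarized cubic identity, the elements with $\ell_B=0=v_B$ are multiples of $A$;
\item $\dim V=3$;
\item the substitutions $B_{11}\mapsto B_{11}+p\,\ell_B^{T}+v_B\,q^{T}$ (coming from $P_{12},Q_{21}$), together with adding multiples of $A$, bring $V$ to $\Alt$.
\end{enumerate}
None of this is carried out in your proposal. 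As written it only handles max rank $\le 1$ and the cases $V\subseteq\Comp_{2,0}$ or $V\subseteq\Comp_{0,2}$.
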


Again, the matrix subspace classification \cref{lem:bounded-rank-two} gives rise to a tensor theorem, as follows (closely related to \cite[Lemma~4.3]{gesmundo-zuiddam}):

\begin{lemma}\label{lem:max-rank-equals-2}
	Let $T \in \CC^n \otimes \CC^n \otimes \CC^n$ concise, $n \geq 3$. Suppose $\subrank_3(T) = 2$. Then either
	\begin{itemize}
		\item $\subrank_1(T) = \subrank_2(T) = n$ (binding in direction 1 and 2)
		\item or $T \cong \mathsf{D}$.
	\end{itemize}
\end{lemma}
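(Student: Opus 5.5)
Let $V = \linspan(M_1,\dots,M_n)$ be the slice span of $T$ in direction 3. By conciseness (condition (3)) $\dim V = n \geq 3$. Since $\subrank_3(T) = 2$, every element of $V$ has rank at most $2$, so \cref{lem:bounded-rank-two} applies. Up to equivalence, which corresponds to acting on $T$ by $\GL_n\times\GL_n\times\{\id\}$ and so preserves conciseness, isomorphism class and the max-ranks, we have $V \subseteq \Comp_{2,0}$, $V\subseteq \Comp_{0,2}$, $V \subseteq \Comp_{1,1}$, or $V = \Alt$. I will handle these four cases in turn.

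\emph{Cases $\Comp_{2,0}$ and $\Comp_{0,2}$.} If $V\subseteq\Comp_{2,0}$, every $M_i$ is supported on the first two rows. Then the horizontal concatenation $(M_1;\dots;M_n)$ has rank at most $2<n$, which contradicts condition (1) of conciseness. The case $\Comp_{0,2}$ is symmetric and contradicts condition (2).

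\emph{Case $V = \Alt$.} Here $\dim V = 3$, so $n=3$. The direction-3 slices span the $3\times 3$ skew-symmetric matrices. After a change of basis in direction 3, which is an isomorphism, the slices are exactly the three standard skew basis matrices. That tensor is, up to sign and scaling of the basis vectors, $e_1\wedge e_2\wedge e_3$, so $T\cong\mathsf{D}$.

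\emph{Case $V \subseteq \Comp_{1,1}$.} This is the main case. Each $M_i$ is supported on row $1$ together with column $1$. Write $M_i = e_1 u_i^T + v_i e_1^T$, normalized so that $v_i$ has zero first coordinate. Conciseness condition (1) forces the column spaces to span $\CC^n$, so the vectors $v_1,\dots,v_n$ span $\{0\}\times\CC^{n-1}$. Similarly condition (2) forces $u_1,\dots,u_n$ to span $\CC^n$. Now I compute the slices in directions 1 and 2.
\begin{itemize}
\item The direction-1 slice indexed by $a=1$ is the matrix $\bigl((u_k)_b\bigr)_{b,k}$ (up to transposition convention). Its columns are the $u_k$, which span $\CC^n$, so this slice has rank $n$ and $\subrank_1(T)=n$.
\item The direction-2 slice indexed by $b=1$ has entries $(M_k)_{a,1} = (u_k)_1\delta_{a1} + (v_k)_a$. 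Its rows $2,\dots,n$ come from the $v_k$, which span an $(n-1)$-dimensional space. Its first row is $((u_k)_1)_k$.
\end{itemize}
To see that this direction-2 slice has full rank, I would argue that the first row is not in the span of rows $2,\dots,n$. If it were, some element of $V$ would have to vanish on column $1$ and row $1$ in a degenerate way, and I expect this to contradict $\dim V = n$ or condition (2). Failing that, I would argue via a generic linear combination of the direction-2 slices to reach $\subrank_2(T)=n$.

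I expect this last step to be the main obstacle: getting full rank in direction 2, with the corner entries $(M_k)_{11}$ handled carefully.
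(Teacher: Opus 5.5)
Your treatment of the cases $\Comp_{2,0}$, $\Comp_{0,2}$ and $\Alt$ is fine. The main case $V\subseteq\Comp_{1,1}$, however, has a genuine gap, in both directions.

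You claim that condition (2) forces $u_1,\dots,u_n$ to span $\CC^n$, but this is false. We have $\row M_i\subseteq\linspan(e_1^T,u_i^T)$, so condition (2) only says that $e_1,u_1,\dots,u_n$ span $\CC^n$. Equivalently, the $u_i$ with their first coordinate deleted span $\CC^{n-1}$. Your normalization puts the corner entries $(M_i)_{11}$ into the $u_i$, so the corner problem you flag in direction 2 is equally present in direction 1. In fact neither ``slice indexed by $1$'' needs to be invertible. Take $n=3$ and matrix units $M_1=E_{12}+E_{21}$, $M_2=E_{13}+E_{31}$, $M_3=E_{12}+E_{31}$. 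Then $T$ is concise with $\subrank_3(T)=2$, yet $u_1=u_3=e_2$ and $u_2=e_3$. Moreover, the direction-2 slice indexed by $b=1$ has columns $e_2,e_3,e_3$, so both slices have rank $2$. Hence your primary plan in direction 2 (showing the first row is not in the span of rows $2,\dots,n$) cannot succeed, and your argument for $\subrank_1(T)=n$ fails too.

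The missing idea is to use condition (3) and to combine the slice indexed by $1$ with a second slice. Let $B_1,\dots,B_n$ be the direction-2 slices. For $b\ge 2$, $B_b$ is supported on its first row. By condition (1), rows $2,\dots,n$ of $B_1$ span an $(n-1)$-dimensional space $W$. By condition (3), the vectors $((M_k)_{ab})_k$, $a,b\in[n]$, span $\CC^n$. The possibly nonzero ones among them are exactly the rows of $B_1$ and the first rows of $B_2,\dots,B_n$. Hence the first row of some $B_b$ lies outside $W$. Then $B_1$ (if $b=1$) or $B_b+\epsilon B_1$ (for all but finitely many $\epsilon$) is invertible, so $\subrank_2(T)=n$. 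The transposed argument, using conditions (2) and (3), gives $\subrank_1(T)=n$.

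The paper does essentially this in normalized form. After a base change, the first columns of $M_2,\dots,M_n$ are $e_2,\dots,e_n$, and some column $j>1$ of $M_1$ equals $e_1$ (which needs the slices to be linearly independent). Adding column $j$ to column $1$ in all slices then makes the first direction-2 slice invertible.
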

\begin{proof}
	Let $V$ be the slice span of $T$ (in direction $3$). We apply \cref{lem:bounded-rank-two}. Conciseness and $n\geq 3$ rules out $V \subseteq \Comp_{2,0}$ and $V \subseteq \Comp_{0,2}$.
    
    In the case $V \subseteq \Comp_{1,1}$, we can see that $\subrank_1(T) = \subrank_2(T) = n$. Indeed, let $M_1, \dots, M_n$ be the slices of $T$ in direction $3$, and let $v_{i,j}$ be the $j$-th column of slice $M_i$. Note that $v_{i,j} \in \linspan (e_1)$ for all $j>1$. Assume $\Q_2(T)<n$, then $\dim \linspan(v_{1,1},\dots,v_{n,1})<n$. By conciseness of $T$, we know that $\dim \linspan(v_{i,j}: i,j \in [n])=n$. This forces $\Q_2(T)=n-1$. After appropriate base change, we have $v_{i,1}=e_i$ for all $i>1$, and there exists a $j>1$ such that $v_{1,j}=e_1$. Adding the $j$-th column simultaneously on all slices to the first column yields an isomorphic tensor $T'$ with $\Q_2(T')=n$ which is a contradiction. Hence $\Q_1(T)=\Q_2(T)=n$. 
    
    In the case $V = \Alt$, one can prove that $T \cong \mathsf{D}$.
\end{proof}

\section{Main proof outline}

To prove \cref{th:main} we will prove the following more precise stabilizer dimension upper bound which incorporates the max-rank of the tensor. 
Let $T \in \CC^n \otimes \CC^n \otimes \CC^n$ be concise. Let $q = \subrank_i(T)$ for some $i \in [3]$.

\begin{theorem}\label{th:stab-dim-max-rank}
Suppose $q < n$. Then $\dim G_T \leq n^2 + 1 - (q-2)(n-q)$.\footnote{In fact, this statement holds without the assumption $q < n$.}
\end{theorem}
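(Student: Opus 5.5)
By permuting the three tensor factors, which does not change $\dim G_T$, I may assume $q = \subrank_3(T)$. The plan follows the outline in the introduction: reduce the tensor stabilizer to the left-right stabilizer of the slice tuple, and then apply \cref{th:matrix-tuple}.

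\emph{Step 1: normal form for the slices.} Write $\mathfrak{g}_T = \{(X,Y,Z) : X M_i + M_i Y^T + \sum_j Z_{ij} M_j = 0\ \forall i\}$, where $M_1,\dots,M_n$ are the slices in direction 3. By \cref{sec:profiles}, a generic change of basis in the third factor gives a basis of the slice span whose row and column rank profiles $r_k$, $c_k$ are non-increasing with $r_1 = c_1 = q$. Conciseness gives $\sum_i \col M_i = \sum_i \row M_i = \CC^n$, and $q<n$ holds by hypothesis. So every hypothesis of \cref{th:matrix-tuple} is met with $d = n$.

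\emph{Step 2: eliminating the third factor.} Consider the projection $\mathfrak{g}_T \to \CC^{n\times n}$, $(X,Y,Z) \mapsto Z$. Its kernel is the space $H$ of pairs $(X,Y)$ with $XM_i + M_iY^T = 0$ for all $i$, which is the Lie algebra of $G_{M_1,\dots,M_n}$. Its image lies in $\{Z : Z\cdot \mathcal{M} \subseteq X\mathcal{M} + \mathcal{M}Y^T \text{ for some } X, Y\}$, which I bound crudely by all of $\CC^{n\times n}$. This gives $\dim \mathfrak{g}_T \le n^2 + \dim H$. That is far too weak, so I will sharpen it as follows. The pair $(X,Y)$ is determined by $Z$ only up to $H$, and the relation $XM_1 + M_1Y^T = -\sum_j Z_{1j}M_j$ is strongly constraining. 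Instead of eliminating $Z$, I will therefore use the mirror reduction: fix $Z$, and count the solutions in $(X,Y)$ directly, filtering by the profile. The target is a bound of the form $\dim\mathfrak g_T \le nd' + \dim H$ with $\sum r_ic_i$ controlled.

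\emph{Step 3: combining the bounds.} Plugging in \cref{th:matrix-tuple} gives $\dim H \le \sum_i r_ic_i - r_2(q-r_2)$. I will then maximise $\sum_i r_i c_i - r_2(q-r_2)$ plus the $Z$-contribution over all non-increasing profiles with $r_1=c_1=q$ and $\sum_i r_i = \sum_i c_i = n$. Since $\sum_i r_i c_i \le q\cdot n$ under these constraints, the crude count produces roughly $n^2 + qn$, so the $Z$-part must be bounded by about $n(n-q)+O(1)$ rather than $n^2$. This suggests bounding the image of $Z$ by the condition that $Z$ preserves a flag coming from the rank stratification of the slice span. For example, $Z$ must map the generic element $M_1$ into the tangent space $X M_1 + M_1Y^T + \dots$, which has codimension $(n-q)^2$ in $\CC^{n\times n}$ for rank-$q$ matrices. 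This constrains the first row of $Z$ relative to the rest. An elementary optimisation should then yield $n^2+1-(q-2)(n-q)$, with equality forcing $r_2 = q$ or $r_2 = 0$ type extremes.

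\emph{Main obstacle.} The hard step is Step 2: getting a $Z$-contribution sharp enough that, after combining it with \cref{th:matrix-tuple}, the total collapses exactly to $n^2+1-(q-2)(n-q)$. I would first try bounding, row by row, the dimension of admissible $Z_{i\cdot}$ by $\dim\mathcal{M}\cap T_{M_i}$, where $T_{M_i}$ is the tangent space to the rank-$\le q$ locus at $M_i$. I would then compare this term by term with $r_ic_i$, hoping for a telescoping with the $-r_2(q-r_2)$ term. If that fails, I would fall back on a case split by $q$: for $q=2$, use \cref{lem:max-rank-equals-2} directly.
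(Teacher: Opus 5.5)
Your proposal has a genuine gap, located where you expect it: Step~2 never produces a usable bound on the $Z$-contribution, and the mechanisms you suggest cannot produce one.

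\begin{itemize}
\item Projecting onto all of $Z$ costs $n^2$, as you note.
\item ``Fix $Z$ and count $(X,Y)$'' is the same decomposition: the fibres are translates of $H$. So it gives nothing new.
\item The tangent-space idea is vacuous. The set $\{XM_1+M_1Y^T\}$ is exactly the tangent space at $M_1$ to the rank-$\le q$ locus. In the normal form $M_1=\bigl(\begin{smallmatrix} I_q&0\\0&0\end{smallmatrix}\bigr)$ it consists of the matrices with vanishing lower-right $(n-q)\times(n-q)$ block. Every element $N$ of the slice span $\mathcal M$ satisfies $\rk(M_1+tN)\le q$ for all $t$, so all of $\mathcal M$ already lies in this tangent space. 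Hence the condition $\sum_j Z_{ij}M_j\in\{XM_i+M_iY^T\}$ holds automatically for every rank-$q$ slice, and the row-by-row bound $\dim(\mathcal M\cap T_{M_i})=n$ just returns $n^2$.
\end{itemize}

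So the final ``elementary optimisation'' is never set up. Your equality guess is also off: $q<n$ forces $1\le r_2\le q-1$.

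The missing idea is to project only onto the first $d$ rows of $Z$. Here $d$ is the profile length, the least index with $\sum_{i\le d}\col M_i=\sum_{i\le d}\row M_i=\CC^n$.
\begin{itemize}
\item The kernel of this projection maps injectively into the stabilizer algebra of the truncated tuple $(M_1,\dots,M_d)$, not of all $n$ slices as in your $H$. Injectivity holds because the slices are linearly independent, so $(X,Y)=0$ forces $Z=0$.
\item The filtration bound (or \cref{th:matrix-tuple}) still applies to the truncated tuple, since $U_d=V_d=\CC^n$.
\item Together these give $\dim\mathfrak g_T\le nd+\sum_i r_ic_i-(q-1)$.
\end{itemize}

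Two profile inequalities then finish the proof.
\begin{itemize}
\item $d\le n-q+1$, because $r_2,\dots,r_d\ge 1$ and they sum to $n-q$.
\item $\sum_i r_ic_i\le n+q(n-d)$. This coupling is the trade-off you lack: a larger $d$ costs more rows of $Z$ but forces a smaller $\sum_i r_ic_i$. Your bound $\sum_i r_ic_i\le qn$ together with $nd\le n(n-q+1)$ would only give $n^2+n-q+1$.
\end{itemize}
With both inequalities, $nd+n+q(n-d)-(q-1)=n+qn-(q-1)+(n-q)d\le n^2+1-(q-2)(n-q)$.
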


From \cref{th:stab-dim-max-rank} we directly get:

\begin{corollary}\label{cor:n-squared}
	If $2 < q < n$, then $\dim G_T \leq n^2$.
\end{corollary}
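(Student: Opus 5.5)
The corollary follows directly from \cref{th:stab-dim-max-rank}, so the plan is a short inequality check. Under the hypothesis $2 < q < n$, the condition $q < n$ needed to apply \cref{th:stab-dim-max-rank} holds. That theorem therefore gives
\[
\dim G_T \leq n^2 + 1 - (q-2)(n-q).
\]

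It remains to see that the subtracted term is at least $1$. Since $q$ and $n$ are integers, $q > 2$ gives $q - 2 \geq 1$, and $q < n$ gives $n - q \geq 1$. Hence $(q-2)(n-q) \geq 1$, and so $\dim G_T \leq n^2 + 1 - 1 = n^2$.

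There is no real obstacle here; all the work lies in \cref{th:stab-dim-max-rank} itself. The only point to note is that $q$ may be $\subrank_i(T)$ for any direction $i \in [3]$, so the corollary holds as soon as some direction has max-rank strictly between $2$ and $n$.
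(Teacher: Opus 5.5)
Your proposal is correct and is exactly the paper's argument: the paper obtains the corollary directly from \cref{th:stab-dim-max-rank}, and your observation that the integers satisfy $q-2\geq 1$ and $n-q\geq 1$, so that $(q-2)(n-q)\geq 1$, is precisely the check it leaves implicit.
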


From \cref{cor:n-squared}, together with the classifications in \cref{sec:matrix-subspaces} and the previously known \cref{th:CGLV} for binding tensors, the main result \cref{th:main} follows quickly:

\begin{proof}[Proof of \cref{th:main}]
To prove: $\dim G_T \leq n^2 + 1$ and the extremizers are $\mathsf{N}_n$ (and permutations) and $\mathsf{D}$.

Suppose $n=1$, then $T \cong e_1 \otimes e_1 \otimes e_1$ and $\dim G_T = 2 = n^2 + 1$ by \cref{ex:scalar}.

Suppose $n \geq 2$. Suppose $T$ is binding. Then by \cref{th:CGLV} we know that $\dim G_T \leq n^2 + 1$ and that the extremizers are $\mathsf{N}_n$ and permutations.

Suppose $T$ is not binding. Then there is a max-rank $q < n$. We do some basic dimension bookkeeping: By \cref{lem:max-rank-at-least-2}, $q \geq 2$, so $n \geq 3$. We consider two cases for $q$:
If $q = 2$, then by \cref{lem:max-rank-equals-2}, either $T$ is binding (which we assumed not) or $T \cong \mathsf{D}$, and $\dim G_T = 10 = n^2 + 1$ by \cref{ex:skew}.
If $q \geq 3$, then $\dim G_T \leq n^2$ by \cref{cor:n-squared}. 
\end{proof}

It remains to prove \cref{th:stab-dim-max-rank}.

\section{Row and column profiles}\label{sec:profiles}

Let $T \in \CC^n \otimes \CC^n \otimes \CC^n$ be concise with slices $M_1, \ldots, M_n$ in direction 3. (The following of course works for a slicing in any direction.) Let
\begin{align*}
U_k &\coloneqq \sum_{i=1}^k \col M_i,\quad c_k \coloneqq \dim U_k - \dim U_{k-1}\\
V_k &\coloneqq \sum_{i=1}^k \row M_i,\quad r_k \coloneqq \dim V_k - \dim V_{k-1}.
\end{align*}
We call $c_1, \ldots, c_n$ the \emph{column profile} of $T$, and $r_1, \ldots, r_n$ the \emph{row profile} of $T$. By conciseness of $T$, we have $c_1 + \cdots + c_n = r_1 + \cdots + r_n = n$. One sees directly that the row and column profile are not invariant under basis transformation of $T$. We can choose a ``good basis'' (and in fact those form a non-empty Zariski-open set). (This is related to ``flushing'' and basis-shifting arguments \cite[Lemma~2.4]{MR4965400}, \cite{MR5059810}.)

\begin{lemma}\label{lemma: existance of good basis}
	There is a good basis for $T$ such that
	\begin{itemize}
		\item $c_1 \geq c_2 \geq \cdots \geq c_n$ and $r_1 \geq r_2 \geq \cdots \geq r_n$ and
		\item $c_1 = r_1 = \subrank_3(T)$
	\end{itemize}
\end{lemma}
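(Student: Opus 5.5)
We will show that a generic choice of basis of the slice span works. Write $\mathcal{M} = \linspan(M_1,\dots,M_n)$; by conciseness $\dim \mathcal{M} = n$. A change of basis in direction 3 replaces the slices by $N_k = \sum_j g_{kj} M_j$ for $g \in \GL_n$, so the slice span stays $\mathcal{M}$ and only the ordered basis $(N_1,\dots,N_n)$ changes. The plan is to show that each required property holds on a nonempty Zariski-open subset of $\GL_n$. Since $\GL_n$ is irreducible, any finite intersection of such sets is again nonempty, and a point $g$ in the intersection gives the good basis.

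\emph{Step 1: $c_1 = r_1 = \Q_3(T)$.} We have $c_1 = \rk N_1 = r_1$. The set of $g$ with $\rk(\sum_j g_{1j} M_j) = \Q_3(T)$ is defined by the non-vanishing of some $\Q_3(T)$-minor. It is therefore open, and it is nonempty by the definition of $\Q_3(T)$ as a maximum.

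\emph{Step 2: monotonicity.} For each $k$, the quantity $\dim U_k = \rk(N_1;\dots;N_k)$ is lower semicontinuous in $g$. Hence there is a nonempty open set on which every $\dim U_k$ attains its generic (maximal) value $u_k$, and likewise every $\dim V_k$ attains its generic value $v_k$. It remains to show that the generic values satisfy $u_k - u_{k-1} \ge u_{k+1} - u_k$, and similarly for $v_k$.

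Fix generic $N_1,\dots,N_{k-1}$ spanning $W$, with $U = \sum_{i<k} \col N_i$. Consider the map $\phi\colon \mathcal{M} \to \mathrm{Hom}(\CC^n, \CC^n/U)$, $N \mapsto \pi_U \circ N$. Define $\rho(S) = \dim \sum_{N \in S} \im \phi(N)$ for subspaces $S$. Then $\rho$ is the rank function of a polymatroid-like (submodular) structure: it is the dimension of a sum of subspaces depending linearly on $S$.

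Choose $N_k, N_{k+1}$ generic. Then
\[
c_{k+1} = \dim(\col N_{k+1} + U') - \dim U', \qquad U' = U + \col N_k.
\]
This is at most $\dim(\col N_{k+1} + U) - \dim U$. The latter has the same generic value as $c_k$, because $N_k$ and $N_{k+1}$ play symmetric roles: both are generic elements of $\mathcal{M}$ given $N_1,\dots,N_{k-1}$. So $c_{k+1} \le c_k$. The same argument with rows gives $r_{k+1} \le r_k$.

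Making this symmetry rigorous is the main point to be careful about. Since $U \subseteq U'$, the inequality $\dim(X + U') - \dim U' \le \dim(X + U) - \dim U$ is just modularity of dimension. The generic value of $\dim(\col N + U) - \dim U$ over generic $N$ equals the generic $c_k$, since genericity of $(N_1,\dots,N_k)$ is achieved on an open set. Swapping $N_k$ and $N_{k+1}$ preserves that open set up to intersection, so one can pick $g$ where both orders are generic.

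Finally, intersect the open sets for all $k$, for rows and columns, and for Step 1; the intersection is nonempty. Because $c_1$ is then the generic value of $\rk N$, which is $\Q_3(T)$, Step 1 is in fact consistent with Step 2 automatically.
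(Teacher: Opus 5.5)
Your proof is correct. Its core is the same exchange inequality the paper uses: the new column contribution of $N_{k+1}$ relative to $U_{k-1}$ dominates its contribution relative to $U_k$. The former is at most $c_k$ because $N_{k+1}$ could have occupied position $k$.

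The packaging differs. The paper builds the basis greedily: at each stage it picks $M_k \notin W_{k-1}$ simultaneously maximizing the new row and column contributions, using the product $fgh$ of a column minor, a row minor and a linear form vanishing on $W_{k-1}$. Maximality of $c_i$ then holds by construction, and monotonicity follows because $M_{i+1}$ was an admissible choice at stage $i$. You instead take one $g$ in an intersection of nonempty Zariski-open subsets of $\GL_n$, on which $c_k = u_k - u_{k-1}$ with $u_k$ the maximal value of $\rk(N_1;\dots;N_k)$. This gets linear independence and simultaneous row/column maximization for free. It also proves the paper's parenthetical remark that generic bases are good.

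Two simplifications are available. First, the submodularity of $\rho$ is never used and can be dropped. Second, the set $\Omega\cap\sigma\Omega$ is more than you need. Since $N_1,\dots,N_{k-1},N_{k+1}$ are the first $k$ slices of the basis given by $\sigma g \in \GL_n$, their concatenation has rank at most $u_k$ by definition of $u_k$ as a maximum. Hence $c_{k+1} \le \dim(U_{k-1}+\col N_{k+1}) - u_{k-1} \le u_k - u_{k-1} = c_k$ holds directly for every $g$ in your open set. This is exactly the paper's admissibility step, phrased via genericity.
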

\begin{proof}
Let $\mathcal{M}$ denote the slice span of $T$ in direction 3.
We will construct a basis for $\mathcal{M}$, one slice at a time. Suppose that $M_1, \ldots, M_{k-1}$ have been chosen, and write $W_{k-1} \coloneqq \linspan \{M_1, \ldots, M_{k-1}\}$. For $M \in \mathcal{M}$, let 
\[
c(M) \coloneqq \dim(U_{k-1} + \col M) - \dim U_{k-1}, \quad r(M) \coloneqq \dim (V_{k-1} + \row M) - \dim V_{k-1}.
\]
We claim that we can choose $M_k \not\in W_{k-1}$ such that $c(M_k)$ and $r(M_k)$ are simultaneously maximised.

Choose a matrix $M$ maximising $c(M)$. Concatenate the columns of $M$ to a fixed basis of $U_{k-1}$ and choose a largest nonzero minor. This minor is a polynomial $f(M)$, nonzero because it does not vanish at the chosen $M$. For any $M' \in \mathcal{M}\setminus W_{k-1}$, if $f(M')\neq 0$, then $c(M') = c(M)$. A nonzero polynomial $g$ detects the maximal value of $r$ in the same way. Finally, since $W_{k-1}$ is a proper subspace of $\mathcal{M}$, some nonzero linear polynomial $h$ vanishes on it. The product $fgh$ is a nonzero polynomial on the complex vector space $\mathcal{M}$, so it is nonzero at some $M_k$. This matrix lies outside $W_{k-1}$ and simultaneously maximises $c(M_k)$ and $r(M_k)$. Repeating the construction gives an ordered basis $M_1, \ldots, M_k$.

At the first step, $U_0 = V_0 = 0$, so the $c(M)$ and $r(M)$ being maximised are both simply $\rank(M)$. Therefore, $c_1 = r_1 = \subrank_3(T)$. 

It remains to prove that the two profiles are nonincreasing. Suppose, for instance, that $c_i < c_{i+1}$. At stage $i$, the later slice $M_{i+1}$ was an admissible choise, and its new column constribution relative to $U_{i-1}$ is at least its contribution relative to the larger space $U_i$:
\[
\dim(U_{i-1} + \col M_{i+1}) - \dim U_{i-1} \geq \dim (U_i + \col M_{i+1}) - \dim U_i = c_{i+1} > c_i.
\]
This contradicts the choise of $M_i$. Hence $c_1\geq \cdots \geq c_n$. The same argument with rows gives $r_1 \geq \cdots \geq r_n$.
\end{proof}

\section{Profile numerics}

Let $T \in \CC^n \otimes \CC^n \otimes \CC^n$ be concise and $q \coloneqq \subrank_3(T)$.

In order to prove \cref{th:stab-dim-max-rank}, we will prove the following:

\begin{theorem}\label{th:dim-stab-good-basis}
Let $T$ be in a good basis. Suppose $q < n$. Let $d \in [n]$ be the largest such that $c_d > 0$ or $r_d > 0$. Then $\dim G_T \leq nd + \sum_{i=1}^n r_i c_i - (q-1)$.
\end{theorem}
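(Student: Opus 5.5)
I will work with the Lie algebra
\[
\mathfrak{g}_T = \{(X,Y,Z) : \forall i \in [n],\ X M_i + M_i Y^T + \textstyle\sum_{j} Z_{ij}M_j = 0\},
\]
where $M_1,\dots,M_n$ is a good basis as in \cref{lemma: existance of good basis}. The bound splits into two pieces.

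\textbf{Step 1: eliminating $Z$.} I consider the projection $\pi\colon \mathfrak{g}_T \to \CC^{n\times n}\times\CC^{n\times n}$, $(X,Y,Z)\mapsto (X,Y)$. Its kernel consists of triples $(0,0,Z)$ with $\sum_j Z_{ij}M_j=0$ for all $i$. Since the $M_j$ are linearly independent (conciseness condition (3)), this forces $Z=0$, so $\pi$ is injective. It therefore suffices to bound the dimension of the image, namely the space of pairs $(X,Y)$ for which each $XM_i+M_iY^T$ lies in $\mathcal{M}=\linspan(M_1,\dots,M_n)$. I filter this image using the first $d$ slices. The map $(X,Y)\mapsto (XM_i+M_iY^T)_{i\le d}$ lands in $\mathcal{M}^d$, which has dimension $nd$. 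Its kernel is exactly the Lie algebra $H_d$ of the stabilizer of the tuple $(M_1,\dots,M_d)$ under the left-right action. Hence $\dim\mathfrak{g}_T\le nd+\dim H_d$. The slices $M_{d+1},\dots,M_n$ have $c_i=r_i=0$, so their columns and rows already lie in $U_d$ and $V_d$; they impose no further needed constraint and can simply be dropped.

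\textbf{Step 2: bounding $\dim H_d$.} I will show $\dim H_d\le \sum_i r_ic_i-(q-1)$. Choose bases adapted to the flags $U_1\subseteq U_2\subseteq\cdots\subseteq U_d=\CC^n$ and $V_1\subseteq\cdots\subseteq V_d=\CC^n$, so that $X$ and $Y$ become block matrices with blocks of sizes $c_i\times c_j$ and $r_i\times r_j$.

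The condition $XM_1=-M_1Y^T$ with $\rk M_1=q$ determines how $X$ acts on $U_1=\col M_1$ in terms of $Y$ restricted to $V_1$. Proceeding inductively in $k$, the equation $XM_k+M_kY^T=0$ should determine the block column of $X$ on the new part $U_k/U_{k-1}$ modulo data already fixed. The free parameters then correspond to block pairs $(i,i)$ counted by $r_ic_i$, and the monotonicity of the profiles is used to match the leftover blocks.

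This is essentially the filtration argument behind \cref{th:matrix-tuple}. The first target is the weaker bound $\sum r_ic_i$. After that, I extract the correction $q-1$ from slice $M_1$. For $H_1$ alone, the dimension is $n^2+n^2-(\text{constraints})$, and the constraint $XM_1+M_1Y^T=0$ has rank roughly $nq+nq-q^2$. The $q\times q$ "diagonal" part, meaning $X|_{U_1}$ paired with $Y|_{V_1}$, gives $q^2$ parameters but only a $\GL_q$-worth of them. I expect the saving of $q-1$ to come from comparing $M_1$ with $M_2$, using the hypothesis $q<n$ (so $d\ge2$ and $c_2\ge1$) together with maximality of rank $q$ in $\mathcal{M}$. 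Concretely, $M_1+tM_2$ has rank at most $q$ for all $t$. This forces $M_2$ to map $\ker$-directions into $\col M_1$ in a constrained way, which kills at least $q-1$ of the $q^2$ parameters in the $(1,1)$ block.

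\textbf{Main obstacle.} The hard part is Step 2: making the block-triangular parameter count rigorous, in particular showing that the off-diagonal blocks are fully determined, and pinning down the exact $q-1$ loss. For the latter I would first try the derivative of the rank condition, $\frac{d}{dt}$ of the $(q+1)$-minors of $M_1+tM_2$ at $t=0$. This gives the identity $P_{\ker}^{\mathrm{left}}M_2P_{\ker}^{\mathrm{right}}=0$, which should cut the admissible $(X_{11},Y_{11})$ down to a subspace of codimension at least $q-1$.
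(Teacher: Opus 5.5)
Your Step 1 is correct and matches the paper's reduction. The paper projects $\mathfrak{g}_T$ onto the first $d$ rows of $Z$, which carries the same information as your map $(X,Y)\mapsto(XM_i+M_iY^T)_{i\le d}\in\mathcal{M}^d$; both give $\dim\mathfrak{g}_T\le nd+\dim H_d$.

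\textbf{Gap 1: the bound $\dim H_d\le\sum_i r_ic_i$.} Step 2 is the actual content of the theorem, and you only describe it heuristically (``should determine'', ``I expect''). As sketched, the count does not fit together. In a basis adapted to the $U$-flag the diagonal blocks of $X$ are $c_i\times c_i$, and nothing explains where a factor $r_i$ comes from. The missing idea is the filtration $F_k=\{(X,Y)\in H_d: XU_k=0\}$, which ends in $F_d=0$ because $U_d=V_d=\CC^n$. It comes with the map $\theta_k\colon F_k\to\CC^{n\times n}$, $(X,Y)\mapsto XM_{k+1}$, whose kernel is $F_{k+1}$. For $k<d$, use both expressions $N=XM_{k+1}=-M_{k+1}Y^T$ of an image element:
\begin{itemize}
\item $N$ vanishes on $P_k=\{v: M_{k+1}v\in U_k\}$, of codimension $c_{k+1}$, since $XU_k=0$;
\item $wN=0$ for all $w\in Q_k=\{w: wM_{k+1}\in V_k\}$, of codimension $r_{k+1}$, since $XM_i=0$ for $i\le k$ forces $M_iY^T=0$, i.e.\ $V_kY^T=0$.
\end{itemize}
Hence $\dim\im\theta_k\le r_{k+1}c_{k+1}$ and $\dim H_d=\sum_k\dim\im\theta_k\le\sum_i r_ic_i$. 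Monotonicity of the profiles plays no role in this part.

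\textbf{Gap 2: the $q-1$ saving.} You have the right raw ingredient: maximality of $q$ forces the lower-right $(n-q)\times(n-q)$ block of $M_2$ to vanish once $M_1=\begin{pmatrix}I_q&0\\0&0\end{pmatrix}$. What is missing is the argument that turns this into a codimension bound.
\begin{itemize}
\item From $XM_1+M_1Y^T=0$ one gets $X_{21}=0$, $(Y^T)_{12}=0$, and $\theta_0(X,Y)=\begin{pmatrix}P&0\\0&0\end{pmatrix}$ with $P=X_{11}=-(Y^T)_{11}$.
\item The top-right block of $XM_2+M_2Y^T=0$ then reads $PB=-B(Y^T)_{22}$, where $B=M_2^{1,2}$ has rank $r_2$.
\item So $P$ must preserve the $r_2$-dimensional subspace $\im B\subseteq\CC^q$, and such $P$ form a space of dimension $q^2-r_2(q-r_2)$.
\item It remains to check $1\le r_2\le q-1$. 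Here $r_2,c_2\ge1$ because the profiles are nonincreasing, sum to $n$, and $r_1=c_1=q<n$. Also $r_2+c_2\le\rk M_2\le q$, which again uses the vanishing lower-right block.
\end{itemize}
Thus $r_2(q-r_2)\ge q-1$. Since this saving lives in the $k=0$ graded piece, it combines directly with the filtration bound. Without these two arguments your proposal does not establish the theorem.
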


Let us first prove \cref{th:stab-dim-max-rank} from \cref{th:dim-stab-good-basis}, using the following auxiliary lemma. %

\begin{lemma}\label{lem:aux}
$d \leq n-q + 1$ and $\sum_{i=1}^n r_i c_i \leq n + q(n-d)$.
\end{lemma}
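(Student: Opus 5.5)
Both inequalities will follow from elementary bookkeeping on the profiles of a good basis, using only three facts. First, the profiles are nonincreasing (\cref{lemma: existance of good basis}). Second, $c_1 = r_1 = q$. Third, by conciseness, $\sum_i c_i = \sum_i r_i = n$. Nothing about the tensor beyond these facts should be needed.

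For the first inequality, recall that $d$ is the largest index with $c_d>0$ or $r_d>0$; by symmetry we may assume $c_d>0$. Monotonicity then gives $c_i \geq c_d \geq 1$ for all $i\le d$. Hence
\[
n = \sum_{i=1}^n c_i \geq c_1 + \sum_{i=2}^{d} c_i \geq q + (d-1),
\]
which rearranges to $d \leq n-q+1$. The same argument applies to the rows if $r_d>0$ instead.

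For the second inequality, we have $r_i=c_i=0$ for $i>d$, so the sum runs only over $i\le d$. We split off the first term, $r_1c_1 = q^2$. Since every $c_i \le c_1 = q$, the remaining terms satisfy
\[
\sum_{i=2}^d r_i c_i \le q\sum_{i=2}^d r_i = q(n-q).
\]
This gives $\sum_i r_ic_i \le q^2 + q(n-q) = qn$, which is too weak: the target is $n + q(n-d)$. We need to exploit that many of the remaining $r_i,c_i$ are forced to be small when $d$ is large.

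To get the sharper bound, write $a_i = c_i - 1 \ge 0$ and $b_i = r_i - 1$ for $2\le i\le d$. One of the two sequences is entirely $\ge 0$ up to index $d$, though the other may be $0$ at some positions; we handle this by bounding those terms by $0$. The key estimate is $r_i c_i \le r_i + q(c_i-1)$ for $i\ge 2$, which holds whenever $c_i\ge1$ because $r_i \le q$; indeed $(r_i - q)(c_i - 1) \le 0$. If $c_i=0$ for some $i\le d$, we instead apply the symmetric estimate with rows and columns exchanged, so we should pick whichever of the two profiles is positive on all of $[d]$. Assuming $c_i\ge1$ for all $i\le d$, summing over $2\le i\le d$ gives
\[
\sum_{i=2}^d r_ic_i \le (n-q) + q\bigl((n-q) - (d-1)\bigr).
\]
Adding $q^2$, the total becomes
\[
n - q + q^2 + qn - q^2 - qd + q = n + q(n-d),
\]
as required.

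The main obstacle is this last step: identifying the right linear bound $r_ic_i \le r_i + q(c_i-1)$, and making sure to apply it to the profile that stays positive up to $d$. That choice is available because, by monotonicity, whichever profile has a positive $d$-th entry is positive on all of $[d]$.
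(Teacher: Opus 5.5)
Your proof is correct and is essentially the paper's argument. The paper also assumes without loss of generality that one profile (there the row profile) is positive on all of $[d]$, and gets $d-1 \le n-q$ from it. For the second bound it writes $r_ic_i = c_i + (r_i-1)c_i \le c_i + q(r_i-1)$, which is exactly your estimate $r_ic_i \le r_i + q(c_i-1)$ with rows and columns exchanged. (The aside about the sequences $a_i, b_i$ is unnecessary, since choosing the profile that is positive up to $d$ already makes your estimate valid for every $2 \le i \le d$.)
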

\begin{proof}
	Without loss of generality, $d = \ell(r) \geq \ell(c)$. Then $r_1, \ldots, r_d \geq 1$ and $r_i = c_i = 0$ for $i > d$. Then $d-1 \leq r_2 + \cdots + r_d = n-q$ gives the first claim. For the second, we have
	\[
	\sum_{i=1}^n r_i c_i = q^2 + \sum_{i=2}^d c_i + \sum_{i=2}^d (r_i - 1)c_i.
	\]
	We use $\sum_{i=2}^d c_i = n-q$ and $\sum_{i=2}^d (r_i - 1)c_i \leq (n-q-d+1)q$ (since $c_i \leq q$ for all $i$), to get
	\[
	\sum_{i=1}^n r_i c_i \leq q^2 + n-q + (n-q-d+1)q = n+q(n-d).\qedhere
	\]
\end{proof}

\begin{proof}[Proof of \cref{th:stab-dim-max-rank}]
	We use \cref{th:dim-stab-good-basis} and then the two items from \cref{lem:aux} to get
	\begin{align*}
		\dim G_T &\leq nd + \sum_{i=1}^n r_i c_i - (q-1) \\
		&\leq nd + n + q(n-d) - (q-1) \\
		&= n + qn - (q-1) + (n-q)d \\
		&\leq n + qn - (q-1) + (n-q)(n-q+1)\\
		&= n^2 + 1 - (q-2)(n-q).\qedhere
	\end{align*}
\end{proof}

\section{Filtration argument}\label{sec:filtration}
We now prove \cref{th:dim-stab-good-basis} in three steps.

\paragraph{Step 1: Reduce third factor.}
Recall that $\dim G_T = \dim \mathfrak{g}_T$ %
where
\[
\mathfrak{g}_T \coloneq \{(X,Y,Z) : (X \otimes I \otimes I + I \otimes Y \otimes I + I \otimes I \otimes Z)\cdot T = 0\}
\]
which, in terms of the slices $M_1,\ldots, M_n$ of $T$ in direction 3, we can write as
\[
\mathfrak{g}_T = \{(X,Y,Z) : \forall i \in [n], X M_i + M_i Y^T + \sum_{j=1}^n Z_{ij}M_j = 0\}.
\]
Let $\pi : \mathfrak{g}_T \to \CC^{d\times n}$ map $(X,Y,Z)$ to the matrix consisting of the first $d$ rows of $Z$. Let $H \coloneqq \{(X,Y) : \forall i \in [d], XM_i + M_i Y^T = 0\}$. We see that if $(X,Y,Z) \in \ker \pi$, then $(X,Y) \in H$. We can also check that $\ker \pi \to H : (X,Y,Z) \mapsto (X,Y)$ is injective. [Indeed, $(X,Y)=0$ implies $\sum_{j=1}^n Z_{ij} M_j=0$ for all $i\in [n]$ and by conciseness of $T$, this implies $Z=0$.] Thus,
\[
\dim \mathfrak{g}_T = \dim \im \pi + \dim \ker \pi \leq nd + \dim H.
\]
It remains to prove $\dim H \leq \sum_{i=1}^n r_i c_i - (q-1)$.

\paragraph{Step 2: Filtration.}
Recall we defined the spaces
$U_k \coloneqq \sum_{i=1}^k \col M_i, 
V_k \coloneqq \sum_{i=1}^k \row M_i$ and $c_k \coloneqq \dim U_k - \dim U_{k-1},  r_k \coloneqq \dim V_k - \dim V_{k-1}$. The definition of $d$ as largest index such that $c_d > 0$ or $r_d > 0$, gives $U_d = U_n = \CC^n$ and $V_d = V_n = \CC^n$. We define
\[
F_k \coloneqq \{(X,Y) \in H : X U_k = 0\}.
\]
We see that
\[
H = F_0 \supseteq F_1 \supseteq \cdots \supseteq F_d = \dots= F_n= 0.
\]
To see $F_d = 0$: Let $(X,Y) \in F_d$. From $U_d = \CC^n$ we get $X = 0$. Then by definition of $H$, $\forall i \in [d], M_iY^T = 0$. Using $V_d = \CC^n$ we find $Y^T = 0$.

Define the map $\theta_k : F_k \to \CC^{n \times n} , (X,Y) \mapsto X M_{k+1}$. One verifies directly that it has kernel $F_{k+1}$. Thus $\dim \im \theta_k = \dim F_k - \dim F_{k+1}$. So we have a telescoping sum $\dim H = \sum_{k=0}^{n-1} \dim \im \theta_k$.

\paragraph{Step 3: Upper bound $\dim \im \theta_k$.}
In the following two claims, we prove an upper bound on  $\dim \im \theta_k$ for all $k$, and a stronger one for $k=0$.
\begin{claim}\label{claim:1}
	$\forall k, \dim \im \theta_k \leq r_{k+1} c_{k+1}$.
\end{claim}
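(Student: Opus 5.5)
We want to show that the image of $\theta_k$ is contained in a linear space of dimension $r_{k+1}c_{k+1}$. Fix $(X,Y)\in F_k$, so $XU_k=0$ and, by the definition of $H$, $XM_{k+1} = -M_{k+1}Y^T$. The plan is to show that $XM_{k+1}$ is determined by a bilinear-type datum on spaces of dimensions $c_{k+1}$ and $r_{k+1}$.

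\emph{Column side.} The columns of $XM_{k+1}$ are $X$ applied to vectors of $\col M_{k+1}\subseteq U_{k+1}$. Since $X$ vanishes on $U_k$, $X|_{U_{k+1}}$ factors through the quotient $U_{k+1}/U_k$, which has dimension $c_{k+1}$. Hence $XM_{k+1}=\bar X\circ \bar M$, where $\bar M:\CC^n\to U_{k+1}/U_k$ is the fixed composite of $M_{k+1}$ with the projection, and $\bar X: U_{k+1}/U_k\to\CC^n$ is induced by $X$. So $XM_{k+1}$ depends linearly on the $c_{k+1}$ vectors $\bar X(u_1),\dots,\bar X(u_{c_{k+1}})$, for a fixed basis $u_j$ of $U_{k+1}/U_k$.

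\emph{Row side.} Next we constrain these image vectors using $XM_{k+1}=-M_{k+1}Y^T$. The row space of $XM_{k+1}$ lies in $\row M_{k+1}$, but we need it to lie in something of dimension $r_{k+1}$. For this we use the other relations: for $i\le k$ we have $XM_i=0$ (as $\col M_i\subseteq U_k$), hence $M_iY^T=0$, i.e.\ $Y$ kills (in the appropriate transposed sense) the row space $V_k$. Thus $M_{k+1}Y^T$ only depends on $Y^T$ restricted to a complement of $V_k$ inside $V_{k+1}$. More precisely, view $M_{k+1}Y^T$ as a matrix whose rows are rows of $M_{k+1}$ multiplied by $Y^T$, with each row in $\row M_{k+1}\subseteq V_{k+1}$. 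Since $Y^T$ kills $V_k$ (i.e.\ $vY^T=0$ for $v\in V_k$), $wY^T$ depends only on $w$ modulo $V_k$, a space of dimension $r_{k+1}$. Therefore every row of $XM_{k+1}$ lies in $W:=\linspan\{wY^T : w\in V_{k+1}\}$, which has dimension at most $r_{k+1}$ — but $W$ depends on $Y$, so this alone does not give a fixed space.

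\emph{Combining both sides.} This is the main obstacle: we must combine the two descriptions so that the parameters form a fixed space of dimension $r_{k+1}c_{k+1}$, not merely show each individual image has rank at most $\min(r_{k+1},c_{k+1})$. The approach is to write $XM_{k+1}=\bar X\bar M$ and also $XM_{k+1}=-\tilde M\,\tilde Y$, where $\tilde M:\,(V_{k+1}/V_k)^*\!\!\to$ rows is fixed and $\tilde Y$ is induced by $Y$. Equating $\bar X \bar M = -\tilde M\tilde Y$, the column space of the product lies in $\im \bar X\cap \col M_{k+1}$ and its row space lies in $\row(\bar M)\cap \im \tilde Y$. We choose bases adapted to $U_k\subseteq U_{k+1}$ and $V_k\subseteq V_{k+1}$. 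Then $XM_{k+1}$ is supported on fixed rows: since it equals $-M_{k+1}Y^T$, its rows lie in $\col M_{k+1}$. It is also supported on fixed columns, determined by the $V_{k+1}/V_k$ part. Concretely, $XM_{k+1}=\bar X\bar M$ shows its row space is inside $\row \bar M$, which is a fixed space of dimension $c_{k+1}$. And $-M_{k+1}Y^T=-\tilde M\tilde Y$ shows its column space is inside $\col\tilde M$, a fixed space of dimension $r_{k+1}$. (Here $\tilde M$ is $M_{k+1}$ restricted to a complement of $V_k$ in the factorization through $\CC^n/V_k^{\perp}$-type quotients.) A matrix whose column space lies in a fixed $r_{k+1}$-dimensional space and whose row space lies in a fixed $c_{k+1}$-dimensional space belongs to a linear space of dimension $r_{k+1}c_{k+1}$, which gives the claim. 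The delicate point is setting up the transposes and quotients correctly so that both fixed spaces genuinely have dimensions $c_{k+1}$ and $r_{k+1}$.
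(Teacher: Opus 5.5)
Your argument is correct and is essentially the paper's proof. Your factorization $XM_{k+1}=\bar X\bar M$ through $U_{k+1}/U_k$ is the statement $NP_k=0$, with $P_k=\ker\bar M$ of codimension $c_{k+1}$. Your factorization $-M_{k+1}Y^T=-\tilde M\tilde Y$ through $V_{k+1}/V_k$, which uses $V_kY^T=0$, is the statement $Q_kN=0$, with $Q_k=\{w : wM_{k+1}\in V_k\}$ the left kernel of $\tilde M$; together these place $N$ in a fixed space of dimension $r_{k+1}c_{k+1}$.

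The one point to tidy is that the relation $XM_{k+1}=-M_{k+1}Y^T$ comes from the definition of $H$ only when $k+1\le d$. For $k\ge d$ the column side alone gives $N=0$, since then $U_k=\CC^n$ forces $X=0$; equivalently $F_k=0$, as the paper notes.
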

\begin{proof}
Let
\[
P_k \coloneqq \{ v \in \CC^n : M_{k+1} v \in U_k\},\quad Q_k \coloneqq \{w \in (\CC^n)^* : w M_{k+1} \in V_k\}.
\]
Then $\codim P_k = c_{k+1}$ and $\codim Q_k = r_{k+1}$.
Let $N \in \im \theta_k$. One readily checks that $N P_k = 0$, and $Q_k N = 0$: there is $(X,Y) \in F_k$ with $N=XM_{k+1}$, then $NP_k=0$ follows directly. Since $(X,Y) \in H$, we observe that for $k+1 \leq d$, we have $N=XM_{k+1}=-M_{k+1}Y^T$ and for $k+1 >d$, we have $F_k=0$ and hence $N=0$. Both imply $Q_kN=0$. This implies that $\im \theta_k$, after appropriate basis transformation, is only supported on an $r_{k+1} \times c_{k+1}$ submatrix, and the dimension bound follows.
\end{proof}

\begin{claim}\label{claim:2}
	$\dim \im \theta_0 \leq q^2 - (q-1)$.
\end{claim}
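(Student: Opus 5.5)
We work directly with the description $\im\theta_0=\{XM_1 : (X,Y)\in H\}$ and exploit that $M_1$ is a max-rank element of the slice span. Since $T$ is in a good basis, $\rk M_1=c_1=r_1=q$. After a base change in the first two factors (which changes $\dim H$ only by conjugation) we may assume $M_1=\begin{bsmallmatrix} I_q & 0\\ 0&0\end{bsmallmatrix}$.

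\textbf{Step 1: block form of the data.} Write every slice and $X$, $Y^T$ in blocks of sizes $q,n-q$:
\[
M_i=\begin{bsmallmatrix} B_i & C_i\\ D_i&E_i\end{bsmallmatrix},\qquad
X=\begin{bsmallmatrix} X_{11} & X_{12}\\ X_{21}&X_{22}\end{bsmallmatrix},\qquad
Y^T=\begin{bsmallmatrix} Y_{11} & Y_{12}\\ Y_{21}&Y_{22}\end{bsmallmatrix}.
\]
The equation $XM_1+M_1Y^T=0$ gives $X_{21}=0$, $Y_{12}=0$ and $X_{11}=-Y_{11}$. Hence $\theta_0(X,Y)=\begin{bsmallmatrix} A&0\\0&0\end{bsmallmatrix}$ with $A=X_{11}$, and it suffices to show that the space $\mathcal A$ of attainable $A$ has codimension at least $q-1$ in $\CC^{q\times q}$.

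\textbf{Step 2: max-rank constraints.} Since every element of the span has rank at most $q$, expanding $\rk(M_1+tM_i+sM_j)\le q$ for small $t,s$ via the Schur complement gives $E_i=0$. It also gives the polarized identities $D_iC_j+D_jC_i=0$, in particular $D_iC_i=0$. Using $E_i=0$ and $Y_{12}=0$, the off-diagonal blocks of $XM_i+M_iY^T=0$ read
\[
AC_i=-C_iY_{22},\qquad D_iA=-X_{22}D_i .
\]
Set $W=\sum_{i\le d}\col C_i\subseteq\CC^q$ and $W'=\sum_{i\le d}\row D_i\subseteq(\CC^q)^*$. Then $A$ preserves $W$ and $W'A\subseteq W'$. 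Because $q<n$ and $T$ is concise, not all $C_i$ vanish (the last $n-q$ columns must appear), and likewise not all $D_i$. Hence $W\ne0$ and $W'\neq0$.

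\textbf{Step 3: case analysis.} If $0\ne W\subsetneq\CC^q$ with $\dim W=w$, then the stabilizer of a $w$-dimensional subspace has codimension $w(q-w)\ge q-1$, and we are done. The same argument works if $W'$ is a proper nonzero subspace. The remaining case is $W=\CC^q$ and $W'=(\CC^q)^*$, and this is the expected main obstacle. Here I would use the polarized relations $D_iC_j=-D_jC_i$, together with the higher-order Schur complement terms $D_iB_jC_k+\dots=0$, to argue that this case is impossible. The intended route is to show that $W'$ annihilates $W$. At minimum, for a generic combination $M=\sum t_iM_i$ one has $D(M)C(M)=0$, so the row space of $D(M)$ annihilates the column space of $C(M)$. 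I would then try to show these two spaces jointly control $W$ and $W'$ enough to force one of them to be proper.

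If that fails, the fallback is to bound the pairs $(A,Y_{22})$ directly. Concatenate the $C_i$ into a full-row-rank matrix $C$ and use that the intertwining relation $AC=-C(I_d\otimes Y_{22})$ with a single shared $Y_{22}$, together with $DA=-(I\otimes X_{22})D$, imposes at least $q-1$ independent linear conditions on $A$. The reason to expect this is that $A$ is then an intertwiner between two nonzero representations, which is already a proper condition in each direction. Combining the cases gives $\dim\im\theta_0\le q^2-(q-1)$.
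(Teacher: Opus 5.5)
\textbf{There is a genuine gap in Step 3.} Your Steps 1 and 2 are sound and match the paper's setup: normalize $M_1$, use the max-rank condition to get $E_i=0$, and read off $AC_i=-C_iY_{22}$ from the top-right block. (The bottom-left relation should read $D_iA=X_{22}D_i$; the sign slip is harmless.)

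The problem is that by passing to the sum $W=\sum_{i\le d}\col C_i$ you create a case, $W=\CC^q$ and $W'=(\CC^q)^*$, which you flag as the main obstacle but do not resolve. Neither suggested route is an argument:
\begin{itemize}
\item The polarized identities only say that $(i,j)\mapsto D_iC_j$ is antisymmetric, not that it vanishes. So ``$W'$ annihilates $W$'' does not follow from them. The form is genuinely nonzero in examples: for the slices of $\mathsf D$, after normalizing $M_1$, one finds $D_2C_3=-D_3C_2\neq 0$.
\item The intertwiner fallback simply asserts, without proof, the count of $q-1$ independent conditions that you are supposed to establish.
\end{itemize}
As written, the claim is not proved.

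\textbf{The missing idea:} you never need the sum, and the good-basis hypothesis (which your Step 3 does not use) does the work. Your relation $AC_i=-C_iY_{22}$ shows that $A$ preserves each $\col C_i$ separately, so take the single slice $i=2$. This is allowed: $d\ge 2$, since $q<n$ forces $U_1\neq\CC^n$.
\begin{itemize}
\item Since $\row M_1$ is spanned by the first $q$ coordinates and $E_2=0$, $r_2$ is the rank of the last $n-q$ columns of $M_2$. That is, $r_2=\rk C_2=\dim\col C_2$, and likewise $c_2=\rk D_2$.
\item The profiles are non-increasing with $r_1=c_1=q<n=\sum_i r_i=\sum_i c_i$, so $r_2\ge 1$ and $c_2\ge 1$.
\item Because the bottom-right block of $M_2$ vanishes, $q\ge \rk M_2\ge \rk C_2+\rk D_2=r_2+c_2$, which gives $r_2\le q-1$.
\end{itemize}
So every attainable $A$ stabilizes a fixed subspace of dimension $r_2$ with $1\le r_2\le q-1$. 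That stabilizer has codimension $r_2(q-r_2)\ge q-1$, which gives the claim. This is exactly the paper's proof, where $\im M_2^{1,2}$ is your $\col C_2$.
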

\begin{proof}
After appropriate basis transformation, we have
\[
M_1 = 
\begin{pmatrix}
I_q & 0 \\
0 & 0
\end{pmatrix},\,
M_2 = \begin{pmatrix}
M_2^{1,1} & M_2^{1,2}\\
M_2^{2,1} & 0
\end{pmatrix}
\]
where $M_2^{1,2}$ has rank $r_2$ and $M_2^{2,1}$ has rank $c_2$. Note that from this it follows that $q \geq \rank M_2 \geq r_2 + c_2$. From $q<n$ we get $r_2, c_2 \geq 1$ and so $1 \leq r_2 \leq q-1$, which we will need later. Using $XM_1 + M_1 Y^T = 0$ we find that 
\[
\theta_0(X,Y) = XM_1 = \begin{pmatrix}
	P & 0\\
	0 & 0
\end{pmatrix}
\]
for a $q\times q$ matrix $P$. One can then prove that $P( \im M_2^{12} ) \subseteq \im M_2^{12}$ using $XM_2+M_2Y^T=0$. For a fixed $s$-dimensional subspace $S \subseteq \CC^q$, the matrices $P \in \CC^{q \times q}$ satisfying $P(S) \subseteq S$ form a linear subspace of dimension $q^2 - s(q-s)$. Applying this to our setting with $s = r_2$, gives
\[
\dim \im \theta_0 \leq q^2 - r_2(q-r_2)
\]
Now to see $q^2 - r_2(q-r_2) \leq q^2 - (q-1)$ we use $1 \leq r_2 \leq q-1$ which we established earlier.
\end{proof}

Using \cref{claim:1} and \cref{claim:2} we get 
\[
\dim G_T = \dim \mathfrak{g}_T \leq nd + \dim H = nd + \sum_k \dim \im \theta_k \leq nd + \sum_{i=1}^n r_i c_i - (q-1),
\]
which proves \cref{th:dim-stab-good-basis}.

\paragraph{Acknowledgements.} JZ was supported by NWO Vidi grant VI.Vidi.243.195 and ERC Starting Grant 101220349 SPECTRA. This work used the Dutch national e-infrastructure with the support of the SURF Cooperative using grant no.~EINF-16525. AH was supported by NWO M1 grant OCENW.M.21.316. AH and JZ are grateful for hospitality and support by the Simons Institute for the Theory of Computing, UC Berkeley, as part of the semester programme Complexity and Linear algebra.

\bibliographystyle{alphaurl}
\bibliography{refs}

\end{document}